\documentclass[letterpaper, 10 pt, conference]{ieeeconf}  % Comment this line out if you need a4paper
\IEEEoverridecommandlockouts           
% \overrideIEEEmargins   
%\IEEEoverridecommandlockouts

\usepackage[table,usenames,dvipsnames]{xcolor}      % color
\usepackage[noadjust]{cite}
\usepackage{amsmath,amssymb,amsfonts,amsthm,dsfont,mathtools, nicematrix}

\usepackage{graphicx,tabularx,adjustbox,color}
\usepackage{multirow}
\usepackage[font=small]{caption}
% \captionsetup[algorithm]{font=small}
\usepackage[font=footnotesize]{subcaption}
\usepackage{booktabs}
\usepackage{comment}

\usepackage{algorithmic}
\usepackage[ruled,vlined]{algorithm2e}
\usepackage{stackengine}
\def\delequal{\mathrel{\ensurestackMath{\stackon[1pt]{=}{\scriptstyle\Delta}}}}
\allowdisplaybreaks
%\usepackage[style=numeric, maxbibnames=9]{biblatex}
%\addbibresource{./bib/ref.bib}

\usepackage[breaklinks=true, colorlinks, bookmarks=true, citecolor=Black, urlcolor=Violet,linkcolor=Black]{hyperref}

\newtheorem{theorem}{Theorem}[section]
\newtheorem{proposition}[theorem]{Proposition}

\newtheorem{remark}[theorem]{Remark}
\theoremstyle{definition}
\newtheorem{definition}[theorem]{Definition}
\newtheorem{problem}{Problem}

\begin{document}
% Calligraphic fonts
\newcommand{\calA}{{\cal A}}
\newcommand{\calB}{{\cal B}}
\newcommand{\calC}{{\cal C}}
\newcommand{\calD}{{\cal D}}
\newcommand{\calE}{{\cal E}}
\newcommand{\calF}{{\cal F}}
\newcommand{\calG}{{\cal G}}
\newcommand{\calH}{{\cal H}}
\newcommand{\calI}{{\cal I}}
\newcommand{\calJ}{{\cal J}}
\newcommand{\calK}{{\cal K}}
\newcommand{\calL}{{\cal L}}
\newcommand{\calM}{{\cal M}}
\newcommand{\calN}{{\cal N}}
\newcommand{\calO}{{\cal O}}
\newcommand{\calP}{{\cal P}}
\newcommand{\calQ}{{\cal Q}}
\newcommand{\calR}{{\cal R}}
\newcommand{\calS}{{\cal S}}
\newcommand{\calT}{{\cal T}}
\newcommand{\calU}{{\cal U}}
\newcommand{\calV}{{\cal V}}
\newcommand{\calW}{{\cal W}}
\newcommand{\calX}{{\cal X}}
\newcommand{\calY}{{\cal Y}}
\newcommand{\calZ}{{\cal Z}}

% Sets:
\newcommand{\setA}{\textsf{A}}
\newcommand{\setB}{\textsf{B}}
\newcommand{\setC}{\textsf{C}}
\newcommand{\setD}{\textsf{D}}
\newcommand{\setE}{\textsf{E}}
\newcommand{\setF}{\textsf{F}}
\newcommand{\setG}{\textsf{G}}
\newcommand{\setH}{\textsf{H}}
\newcommand{\setI}{\textsf{I}}
\newcommand{\setJ}{\textsf{J}}
\newcommand{\setK}{\textsf{K}}
\newcommand{\setL}{\textsf{L}}
\newcommand{\setM}{\textsf{M}}
\newcommand{\setN}{\textsf{N}}
\newcommand{\setO}{\textsf{O}}
\newcommand{\setP}{\textsf{P}}
\newcommand{\setQ}{\textsf{Q}}
\newcommand{\setR}{\textsf{R}}
\newcommand{\setS}{\textsf{S}}
\newcommand{\setT}{\textsf{T}}
\newcommand{\setU}{\textsf{U}}
\newcommand{\setV}{\textsf{V}}
\newcommand{\setW}{\textsf{W}}
\newcommand{\setX}{\textsf{X}}
\newcommand{\setY}{\textsf{Y}}
\newcommand{\setZ}{\textsf{Z}}

% Vectors
\newcommand{\bfa}{\mathbf{a}}
\newcommand{\bfb}{\mathbf{b}}
\newcommand{\bfc}{\mathbf{c}}
\newcommand{\bfd}{\mathbf{d}}
\newcommand{\bfe}{\mathbf{e}}
\newcommand{\bff}{\mathbf{f}}
\newcommand{\bfg}{\mathbf{g}}
\newcommand{\bfh}{\mathbf{h}}
\newcommand{\bfi}{\mathbf{i}}
\newcommand{\bfj}{\mathbf{j}}
\newcommand{\bfk}{\mathbf{k}}
\newcommand{\bfl}{\mathbf{l}}
\newcommand{\bfm}{\mathbf{m}}
\newcommand{\bfn}{\mathbf{n}}
\newcommand{\bfo}{\mathbf{o}}
\newcommand{\bfp}{\mathbf{p}}
\newcommand{\bfq}{\mathbf{q}}
\newcommand{\bfr}{\mathbf{r}}
\newcommand{\bfs}{\mathbf{s}}
\newcommand{\bft}{\mathbf{t}}
\newcommand{\bfu}{\mathbf{u}}
\newcommand{\bfv}{\mathbf{v}}
\newcommand{\bfw}{\mathbf{w}}
\newcommand{\bfx}{\mathbf{x}}
\newcommand{\bfy}{\mathbf{y}}
\newcommand{\bfz}{\mathbf{z}}

\newcommand{\bfalpha}{\boldsymbol{\alpha}}
\newcommand{\bfbeta}{\boldsymbol{\beta}}
\newcommand{\bfgamma}{\boldsymbol{\gamma}}
\newcommand{\bfdelta}{\boldsymbol{\delta}}
\newcommand{\bfepsilon}{\boldsymbol{\epsilon}}
\newcommand{\bfzeta}{\boldsymbol{\zeta}}
\newcommand{\bfeta}{\boldsymbol{\eta}}
\newcommand{\bftheta}{\boldsymbol{\theta}}
\newcommand{\bfiota}{\boldsymbol{\iota}}
\newcommand{\bfkappa}{\boldsymbol{\kappa}}
\newcommand{\bflambda}{\boldsymbol{\lambda}}
\newcommand{\bfmu}{\boldsymbol{\mu}}
\newcommand{\bfnu}{\boldsymbol{\nu}}
\newcommand{\bfomicron}{\boldsymbol{\omicron}}
\newcommand{\bfpi}{\boldsymbol{\pi}}
\newcommand{\bfrho}{\boldsymbol{\rho}}
\newcommand{\bfsigma}{\boldsymbol{\sigma}}
\newcommand{\bftau}{\boldsymbol{\tau}}
\newcommand{\bfupsilon}{\boldsymbol{\upsilon}}
\newcommand{\bfphi}{\boldsymbol{\phi}}
\newcommand{\bfchi}{\boldsymbol{\chi}}
\newcommand{\bfpsi}{\boldsymbol{\psi}}
\newcommand{\bfomega}{\boldsymbol{\omega}}
\newcommand{\bfxi}{\boldsymbol{\xi}}
\newcommand{\bfell}{\boldsymbol{\ell}}

% Matrices
\newcommand{\bfA}{\mathbf{A}}
\newcommand{\bfB}{\mathbf{B}}
\newcommand{\bfC}{\mathbf{C}}
\newcommand{\bfD}{\mathbf{D}}
\newcommand{\bfE}{\mathbf{E}}
\newcommand{\bfF}{\mathbf{F}}
\newcommand{\bfG}{\mathbf{G}}
\newcommand{\bfH}{\mathbf{H}}
\newcommand{\bfI}{\mathbf{I}}
\newcommand{\bfJ}{\mathbf{J}}
\newcommand{\bfK}{\mathbf{K}}
\newcommand{\bfL}{\mathbf{L}}
\newcommand{\bfM}{\mathbf{M}}
\newcommand{\bfN}{\mathbf{N}}
\newcommand{\bfO}{\mathbf{O}}
\newcommand{\bfP}{\mathbf{P}}
\newcommand{\bfQ}{\mathbf{Q}}
\newcommand{\bfR}{\mathbf{R}}
\newcommand{\bfS}{\mathbf{S}}
\newcommand{\bfT}{\mathbf{T}}
\newcommand{\bfU}{\mathbf{U}}
\newcommand{\bfV}{\mathbf{V}}
\newcommand{\bfW}{\mathbf{W}}
\newcommand{\bfX}{\mathbf{X}}
\newcommand{\bfY}{\mathbf{Y}}
\newcommand{\bfZ}{\mathbf{Z}}

\newcommand{\bfGamma}{\boldsymbol{\Gamma}}
\newcommand{\bfDelta}{\boldsymbol{\Delta}}
\newcommand{\bfTheta}{\boldsymbol{\Theta}}
\newcommand{\bfLambda}{\boldsymbol{\Lambda}}
\newcommand{\bfPi}{\boldsymbol{\Pi}}
\newcommand{\bfSigma}{\boldsymbol{\Sigma}}
\newcommand{\bfUpsilon}{\boldsymbol{\Upsilon}}
\newcommand{\bfPhi}{\boldsymbol{\Phi}}
\newcommand{\bfPsi}{\boldsymbol{\Psi}}
\newcommand{\bfOmega}{\boldsymbol{\Omega}}

% Blackboard Bold:
\newcommand{\bbA}{\mathbb{A}}
\newcommand{\bbB}{\mathbb{B}}
\newcommand{\bbC}{\mathbb{C}}
\newcommand{\bbD}{\mathbb{D}}
\newcommand{\bbE}{\mathbb{E}}
\newcommand{\bbF}{\mathbb{F}}
\newcommand{\bbG}{\mathbb{G}}
\newcommand{\bbH}{\mathbb{H}}
\newcommand{\bbI}{\mathbb{I}}
\newcommand{\bbJ}{\mathbb{J}}
\newcommand{\bbK}{\mathbb{K}}
\newcommand{\bbL}{\mathbb{L}}
\newcommand{\bbM}{\mathbb{M}}
\newcommand{\bbN}{\mathbb{N}}
\newcommand{\bbO}{\mathbb{O}}
\newcommand{\bbP}{\mathbb{P}}
\newcommand{\bbQ}{\mathbb{Q}}
\newcommand{\bbR}{\mathbb{R}}
\newcommand{\bbS}{\mathbb{S}}
\newcommand{\bbT}{\mathbb{T}}
\newcommand{\bbU}{\mathbb{U}}
\newcommand{\bbV}{\mathbb{V}}
\newcommand{\bbW}{\mathbb{W}}
\newcommand{\bbX}{\mathbb{X}}
\newcommand{\bbY}{\mathbb{Y}}
\newcommand{\bbZ}{\mathbb{Z}}

%this paper
\newcommand{\ubfu}{\underline{\bfu}}
\newcommand{\Var}{\textit{Var}}
\newcommand{\Cov}{\textit{Cov}}
%%%%%%%%%%%%%%%%%%%%%%%%%%%%%%%%%%%%%%%%%%%%%%%

\title{\LARGE \bf Safe and Stable Control Synthesis for Uncertain System Models
\\
via Distributionally Robust Optimization} 

\author{Kehan Long$^{*}$ \quad Yinzhuang Yi$^{*}$ \quad Jorge Cort{\'e}s \quad Nikolay Atanasov%
\thanks{$^*$ These authors contributed equally.}%
\thanks{The authors are with the Contextual Robotics Institute, University of California San Diego, La Jolla, CA 92093, USA (e-mails: {\tt \{k3long,yiyi,cortes,natanasov\}@ucsd.edu}).}%
\thanks{We gratefully acknowledge support from NSF RI IIS-2007141.}%
}

% author names and affiliations
% use a multiple column layout for up to three different
% affiliations

% \author{ \quad }

% conference papers do not typically use \thanks and this command
% is locked out in conference mode. If really needed, such as for
% the acknowledgment of grants, issue a \IEEEoverridecommandlockouts
% after \documentclass

% for over three affiliations, or if they all won't fit within the width
% of the page, use this alternative format:
% 
%\author{\IEEEauthorblockN{Michael Shell\IEEEauthorrefmark{1},
%Homer Simpson\IEEEauthorrefmark{2},
%James Kirk\IEEEauthorrefmark{3}, 
%Montgomery Scott\IEEEauthorrefmark{3} and
%Eldon Tyrell\IEEEauthorrefmark{4}}
%\IEEEauthorblockA{\IEEEauthorrefmark{1}School of Electrical and Computer Engineering\\
%Georgia Institute of Technology,
%Atlanta, Georgia 30332--0250\\ Email: see http://www.michaelshell.org/contact.html}
%\IEEEauthorblockA{\IEEEauthorrefmark{2}Twentieth Century Fox, Springfield, USA\\
%Email: homer@thesimpsons.com}
%\IEEEauthorblockA{\IEEEauthorrefmark{3}Starfleet Academy, San Francisco, California 96678-2391\\
%Telephone: (800) 555--1212, Fax: (888) 555--1212}
%\IEEEauthorblockA{\IEEEauthorrefmark{4}Tyrell Inc., 123 Replicant Street, Los Angeles, California 90210--4321}}

% make the title area
\maketitle

% \NA{Perhaps drop the word ``data-driven'' from the title. Uncertain systems already captures this. The expression ``uncertain systems'' is not very precise since the model is uncertain rather than the system but I don't have a better suggestion than ``Safe and Stable Control Synthesis for Uncertain Systems using Distributionally Robust Optimization''.}

% As a general rule, do not put math, special symbols or citations
% in the abstract
\begin{abstract}
This paper considers enforcing safety and stability of dynamical systems in the presence of model uncertainty. Safety and stability constraints may be specified using a control barrier function (CBF) and a control Lyapunov function (CLF), respectively. To take model uncertainty into account, robust and chance formulations of the constraints are commonly considered. However, this requires known error bounds or a known distribution for the model uncertainty, and the resulting formulations may suffer from over-conservatism or over-confidence. In this paper, we assume that only a finite set of model parametric uncertainty samples is available and formulate a distributionally robust chance-constrained program (DRCCP) for control synthesis with CBF safety and CLF stability guarantees. To facilitate efficient computation of control inputs during online execution, we present a reformulation of the DRCCP as a second-order cone program (SOCP). Our formulation is evaluated in an adaptive cruise control example in comparison to 1) a baseline CLF-CBF quadratic programming approach, 2) a robust approach that assumes known error bounds of the system uncertainty, and 3) a chance-constrained approach that assumes a known Gaussian Process distribution of the uncertainty. 
%This paper considers enforcing safety and stability of dynamical systems in the presence of model uncertainty. Traditionally, researchers have taken uncertainty into account and proposed robust or chance-constrained approaches. However, those approaches usually require known error bounds or known distributions of the uncertainty, and the resulting formulations may suffer from over-conservatism or over-confidence. In this paper, we assume only a finite set of uncertainty observations is available for the uncertain control system and propose distributional robust constraints of control barrier functions (CBF) and control Lyapunov functions (CLF). We show that the resulting formulations lead to a mixed-integer quadratic program (MIQP) or a second-order cone program (SOCP), and either of them enables efficient online control synthesis. We evaluate our proposed approaches in an adaptive cruise control example and compare the performance with the baseline CLF-CBF-QP approach, the robust approach, and the chance-constrained approach. 
\end{abstract}

% \marginJC{Make sure we're consistency with the use of dashes. E.g., chance-constrained opt, not chance constrained opt.}

%
% For peerreview papers, this IEEEtran command inserts a page break and
% creates the second title. It will be ignored for other modes.
\IEEEpeerreviewmaketitle

% \section{Problem formulation}
% % no \IEEEPARstart
% Let $\bfx $ and $\bfy$ be possibly correlated random vectors. Let $f \in \calF$ be an function of two or more random variables,
% \begin{align}
% \bfz = f(\bfx, \bfy)
% \end{align}

% Finding and implementing a closed library of such functions $\calF$ that satisfy the following condition.
% Given analytical bounds on $\bfx$ and $\bfy$, $P(\|\bfx\| \ge c) \le \phi_{\|\bfx\|}(c)$,  we can compute an analytical tail bounds on $\bfz$, $P(\|\bfz\| \ge c) \le \phi_{\|\bfz\|}(c)$. This can be converted to a deterministic condition for desired level of risk $p_k$, $\|\bfz\| \ge \phi^{-1}_{\|\bfz\|}(p_k)$.

%

\section{Introduction}
\label{sec: intro}

With the increasing deployment of automatic control systems and robotic platforms in unstructured real-world environments, it is crucial to develop feedback controllers with safety and stability guarantees in the presence of model uncertainty. Enforcing safety by utilizing set invariance properties has become a mainstream approach for constrained control synthesis. Inspired by the property of control Lyapunov functions (CLFs) \cite{sontag1989universal} to yield invariant level sets, control barrier functions (CBFs) \cite{wieland2007} were introduced as a tool to verify that a desired safe subset of the state space is invariant. Stability and safety can be considered simultaneously by introducing CLF and CBF constraints on the control input in a quadratic program (QP) formulation for control synthesis \cite{ames2014cdc, ames2016control}. The reliability and efficiency of CLF-CBF-QP control synthesis has been evidenced in several robotic applications, including multi-agent systems \cite{Wang2017TRO}, aerial robots \cite{Wu2016ACC}, and walking robots \cite{nguyen2016cdc}.

The notion of safety in the presence of system model uncertainty has been mainly described in two ways: using robust constraints \cite{freeman_robust, Petersen2014RobustCO} or chance constraints \cite{Masahiro_ccdp, Zhu2019ChanceConstrainedCA}.  Studies have also considered system uncertainty when pairing safety with stability in the CLF-CBF-QP formulation. Regarding robust formulations, Choi et al. \cite{Jason2020cdc} consider model disturbances with a compact and convex support set and propose a robust control barrier value function to ensure safety. Similarly, \cite{Nguyen2022RobustSC} assumes bounded model uncertainty and reformulates the original safety and stability constraints as min-max constraints. Regarding probabilistic formulations, \cite{dhiman2020control,Long2022RAL} assume a Gaussian Process distribution for the model uncertainty and propose probabilistic versions of the CLF stability and CBF safety constraints. All these approaches require known error bounds or known distributions of the uncertainty. In addition, robust formulations may suffer from over-conservatism due to the worst-case error bounds, while chance-constrained formulations may suffer from over-confidence due to a distributional shift at deployment time. 

To tackle such scenarios, we rely on a body of work from the literature on stochastic programming \cite{AS-DD-AR:14} that considers distributionally robust versions of stochastic optimization problems, see e.g. \cite{AB-LEG-AN:09,AS:17}. In particular, distributionally robust chance-constrained programs (DRCCP) deal with uncertain variables in the constraints when only finitely many samples are available. The main idea is to construct an ambiguity ball centered at the empirical distribution obtained from the observed samples and with radius defined using a probability distance function, such as Kullback–Leibler divergence \cite{Jiang2016DatadrivenCC} or Wasserstein distance \cite{Esfahani2018DatadrivenDR,Chen2018DataDrivenCC,Xie2021OnDR,Hota2019DataDrivenCC,DB-JC-SM:21-tac}. In DRCCP, the desired constraints must be satisfied with high probability for all distributions in the constructed ambiguity set. Given the ability to handle uncertainty with unknown or shifting distribution within the ambiguity set, distributionally robust formulations have been used to enforce constraints in reinforcement learning \cite{chow2017risk, smirnova2019distributionally} and Markov decision processes \cite{DR_MDP_2012, chen2019distributionally, petrik2019beyond}. While these works are closely related, their focus is on discrete-time planning with robustness to uncertainty, while our work considers continuous-time control with safety and stability guarantees.

The contributions of this work are summarized as follows. First, we relax the assumption for safe and stable control synthesis that known error bounds or known distribution of model uncertainty are available by formulating distributionally robust safety and stability constraints using offline model uncertainty samples. Second, we show that the DRCCP control synthesis problem can be reformulated as a second-order cone program (SOCP) in two cases: when there is no restriction on the uncertainty support set and when the uncertainty support set is polyhedral. We demonstrate on an adaptive cruise control problem how our DRCCP SOCP guarantees safety in scenarios with incorrect model uncertainty error bounds or uncertainty distribution shift, in contrast with the vanilla CLF-CBF-QP approach, a robust approach, and a chance-constrained approach.

\section{Preliminaries}
\label{sec: prelim}

% \marginJC{I might repear some things that Nikolay has already noted: this section is quite crazy and ruthless on the reader. Some subsections are just a few lines, there is no gentle introduction of topics, we quickly switch from one thing to the next, context is lacking... }

% \NA{I suggest starting this section with the system definition and CLF and CBF discussion. We can then say that we will be interested in the case where there is uncertainty and discuss the CVaR, Wasserstein, and DRCCP questions.}

This section reviews control Lyapunov and control barrier functions, distributionally robust modeling, and chance-constrained programming. 

\subsection{Optimization-based Control Synthesis}
Consider a non-linear control-affine system\footnotemark[1]{}:
\begin{equation}
\label{eq: true_dynamic}
\begin{aligned}
    \dot{\bfx} = f(\bfx) + g(\bfx) \bfu =     [f(\bfx) \; g(\bfx)] \cdot\begin{bmatrix}
    1 \\
    \bfu
    \end{bmatrix} \delequal F(\bfx)\ubfu,
\end{aligned}
\end{equation}
where $\boldsymbol{x} \in \calX \subseteq \mathbb{R}^{n}$ is the state and $\ubfu \in \underline{\mathcal{U}} := \{ 1\} \times \mathbb{R}^{m}$ is the control input. Assume $f : \mathbb{R}^{n} \mapsto \mathbb{R}^{n}$ and $g : \mathbb{R}^{n} \mapsto \mathbb{R}^{n \times m}$ are locally Lipschitz.
We start by recalling the notions of CLF \cite{sontag1989universal} and CBF \cite{ames2016control}, which play a key role in the synthesis of stable and safe controllers, respectively.

\footnotetext[1]{\textbf{Notation. } The sets of real, non-negative real, and natural numbers are denoted by $\bbR$, $\bbR_{\geq 0}$, and $\bbN$, respectively. For $N \in \bbN$, we let $[N] := \{1,2, \dots N\}$. We denote the distribution and expectation of a random variable $Y$ by $\mathbb{P}$ and $\bbE_{\bbP}(Y)$, respectively. We use $\boldsymbol{0}_n$ and $\boldsymbol{1}_n$ to denote the $n$-dimensional vector with all entries equal to $0$ and $1$, respectively. For scalar $x$, we define $(x)_+ := \max(x,0)$. The $L_2$ norm for a vector $\bfx$ is denoted by $\|\bfx\|$. We denote by $\bfI_m \in \bbR^{m \times m}$ the identity matrix and by $\otimes$ the Kronecker product. We use $\text{vec}(\bfX) \in \mathbb{R}^{nm}$ to denote the vectorization of $\bfX \in \mathbb{R}^{n \times m}$, obtained by stacking its columns. The gradient of a differentiable function $V$ is denoted by $\nabla V$, while its Lie derivative along a vector field $f$ by $\calL_f V  = \nabla V \cdot f$. A continuous function $\alpha: [0,a)\rightarrow [0,\infty )$ is of class $\calK$ if it is strictly increasing and $\alpha(0) = 0$. A continuous function $\alpha:\mathbb{R} \rightarrow \mathbb{R}$ is of extended class $\calK_{\infty}$ if it is of class $\calK$ and $\lim_{r \rightarrow \infty} \alpha(r) = \infty$.}

% We are interested in designing a stable and safe controller for \eqref{eq: true_dynamic} by solving a CLF-CBF-QP \cite{ames2016control}.

% \NA{Why are we definging a CLF? It might be good to first mention that we are interested in designing a controller for \eqref{eq: true_dynamic}, which stabilizes the system.}

\begin{definition}
A positive-definite continuously differentiable function $V: \mathbb{R}^n \mapsto {\mathbb{R}_{\geq 0}}$ is a \emph{control Lyapunov function (CLF)} on $\calX$ for system \eqref{eq: true_dynamic} if there exists a class $\mathcal{K}$ function $\alpha_V$ such that:
\begin{equation}\label{eq:clf}
    \inf_{\ubfu \in \underline{\mathcal{U}}} \textit{CLC}(\bfx,\ubfu) \leq 0, \quad \forall \bfx \in \calX \setminus \{\mathbf{0}\},
\end{equation}
where the \emph{control Lyapunov condition (CLC)} is:
\begin{equation}\label{eq:clc_define}
\begin{aligned}
    \textit{CLC}(\bfx,\ubfu) := \mathcal{L}_f V(\bfx) + \mathcal{L}_g V(\bfx)\bfu + \alpha_V( V(\bfx)).
    %& = [\nabla_{\bfx} V(\bfx)]^\top f(\bfx) + [\nabla_{\bfx} V(\bfx)]^\top g(\bfx)\bfu + \alpha_{V}(V(\bfx)) \\ 
\end{aligned}
\end{equation}
\end{definition}

The existence of a CLF simplifies the stabilization problem considerably because a stabilizing feedback control law can be obtained in terms of the derivatives of the CLF \cite{sontag1989universal}.

In addition to stability, it is often necessary to ensure that the closed-loop system trajectories remain within a safe set $\calC \subset \calX$. To facilitate safe control synthesis, the safe set is specified as the zero superlevel set, $\calC := \{\bfx \in \bbR^n : h(\bfx) \geq 0 \}$, of a function $h$. 

%  in the sense that $\calC$ is forward invariant
% Regarding safety, consider a set $\calC \subseteq \bbR^n$, called safe set, defined as the superlevel set of a continuously differentiable function $h: \mathbb{R}^n \mapsto {\mathbb{R}}$,
% %
% \begin{equation*}
% \label{eq: safe_set}
%     \calC := \{\bfx \in \bbR^n : h(\bfx) \geq 0 \}.
% \end{equation*}
% %
% Safety is then encoded by ensuring
% that $\calC$ is forward invariant, meaning that the system trajectories that start in the safe set remains in there throughout their evolution. 

\begin{definition}
A continuously differentiable function $h: \mathbb{R}^n \mapsto {\mathbb{R}}$ is a \emph{control barrier function (CBF)} on $\mathcal{X}$ for system \eqref{eq: true_dynamic} if there exists an extended class $\mathcal{K}_{\infty}$ function $\alpha_h$ with:
\begin{equation}\label{eq:cbf}
    \sup_{\ubfu\in \underline{\mathcal{U}}} \textit{CBC}(\bfx,\ubfu) \geq 0, \quad \forall \bfx \in \calX,
\end{equation}
where the \emph{control barrier condition (CBC)} is:
% \NA{Sometimes we use $\delequal$, sometimes $\coloneqq$, and sometimes just $=$ for definitions. This should be consistent.}
\begin{equation}
\label{eq:cbc_define}
\begin{aligned}
    \textit{CBC}(\bfx,\ubfu)  := \mathcal{L}_f h(\bfx) + \mathcal{L}_g h(\bfx)\bfu + \alpha_h (h(\bfx)). 
    % & = [\nabla_{\bfx} h(\bfx)]^\top F(\bfx)\ubfu + \alpha_{h}(h(\bfx)).
\end{aligned}
\end{equation}
\end{definition}

% According to~\cite{cbf,ames2016control}, any Lipschitz-continuous controller $\underline{\bfk}: \mathcal{X} \mapsto \underline{\mathcal{U}}$ that satisfies $\textit{CBC}(\bfx,\underline{\bfk}(\bfx)) \geq 0$ for all $\bfx \in \mathcal{X}$ renders the set $\mathcal{S}$ forward invariant for the system~\eqref{eq: true_dynamic}. 
%The existence of a CBF allows us to generate safe controllers. 

Noting that the CLF stability requirement in \eqref{eq:clf} and the CBF safety requirement in \eqref{eq:cbf} are affine in $\ubfu$, they can be enforced as constraints in an optimization problem. Given a baseline controller $\underline{\bfk}(\bfx)$, the following QP modifies the controller to guarantee safety and encourage stability:
\begin{equation}
\label{eq: QP_origin}
\begin{aligned}
& \min_{\ubfu \in \underline{\calU},\delta \in \bbR_{\geq 0}}\,\, \|\ubfu - \underline{\bfk}(\bfx))\|^2 + \lambda \delta^2\\
% \bfu^T H \bfu + p_d \delta^2 \\
\mathrm{s.t.} \, \,  &\textit{CLC}(\bfx,\ubfu) \leq \delta,  \textit{CBC}(\bfx,\ubfu) \geq 0,
\end{aligned}
\end{equation}
%
% where $L(\bfx)$ is a weighting matrix penalizing deviation from the baseline controller\NA{Do we need $L(\bfx)$? It only makes the presentation more cumbersome. Also, do we need a tilde above the baseline controller?}. 
where $\delta \in \bbR_{\geq 0}$ is a slack variable that relaxes the CLF constraints to ensure the feasibility of the QP, controlled by the scaling factor $\lambda > 0$.

We are interested in the control synthesis problem in \eqref{eq: QP_origin} when the system dynamics in \eqref{eq: true_dynamic} are not perfectly known. Considering probabilistic uncertainty in the system model requires probabilistic versions of the safety and stability constraints in \eqref{eq: QP_origin}. We investigate how to handle model uncertainty using samples rather than a known distribution and whether the uncertainty-aware versions of the constraints in \eqref{eq: QP_origin} remain convex and tractable.

\subsection{Distributionally Robust Chance-constrained Program}\label{sec: drccp_prelim}

To handle probabilistic constraints, we begin by reviewing chance-constrained programming. Throughout the paper we consider a complete separable metric space $\Xi$ with metric $d$ and associate with it a Borel $\sigma$-algebra $\calF$ and the set $\calP(\Xi)$ of Borel probability measures on $\Xi$.  A chance-constrained program (CCP) takes the form:
\begin{equation}
\begin{aligned}
\label{eq: ccp}
    &\min_{\bfz \in \calZ} \bfc^\top \bfz,  \\
    \mathrm{s.t.} \, \, & \mathbb{P}(G(\bfz, \bfxi) \leq 0) \geq 1 - \epsilon, 
\end{aligned}
\end{equation}
with closed convex set $\calZ \subseteq \bbR^n$ and uncertainty set $\Xi \subseteq \bbR^k$. The constraint function $G(\bfz, \bfxi) \in \calZ \times \Xi \mapsto \mathbb{R}$ depends both on the decision vector $\bfz$ and an uncertainty vector $\bfxi$, whose distribution $\bbP$ is supported on $\Xi$, and $\epsilon \in (0,1)$ is a user-specified risk tolerance. 
% \NA{It can be mentioned that $\bfxi$ is a random variable on $\Xi$. Also, what is $\calX$ and why is $G$ mapping from $\bbR^n$? Should we state that $\calX \subseteq \bbR^n$?}. 
% To overcome the non-convexity issues in CCP, 
The feasible set defined by the chance constraint in \eqref{eq: ccp} is not convex in general. Nemirovski and Shapiro \cite{Nemirovski2006ConvexAO} proposed a conservative convex approximation \cite{Nemirovski2006ConvexAO} of the feasible set in \eqref{eq: ccp}, which consists of replacing the chance constraint by a conditional value-at-risk (CVaR) constraint:
\begin{equation}
\label{eq: cvar_ccp}
\begin{aligned}
    &\min_{\bfz \in \calZ} \bfc^\top \bfz,  \\
    \mathrm{s.t.} \, \, & \textit{CVaR}_{1-\epsilon}^{\mathbb{P}}(G(\bfz,\bfxi)) \leq 0.
\end{aligned}
\end{equation}
The feasible set of \eqref{eq: cvar_ccp} is a subset of the feasible set of~\eqref{eq: ccp}. The following paragraph describes a way of defining CVaR.

Value-at-risk (VaR) at confidence level $1 - \epsilon$ for $\epsilon \in (0,1)$
%denotes the lower $1 - \epsilon$-quantile value of $Z$, 
is defined as $\textit{VaR}_{1-\epsilon}^{\mathbb{P}_q}(Q) := \inf_{t \in \bbR}\{t \; | \; \bbP_q(Q \leq t) \geq 1 - \epsilon\}$ for a random variable $Q$ with distribution $\bbP_q$.  VaR does not provide information about the right tail of the distribution, and optimization programs involving VaR variables are intractable in general \cite{Mausser_1999}. To address this, Rockafellar and Uryasev \cite{Rockafellar00optimizationof}
introduced conditional value-at-risk (CVaR), defined as $\textit{CVaR}_{1-\epsilon}^{\mathbb{P}_q}(Q) = \bbE_{\mathbb{P}_q} [ Q \; | \; Q \geq \textit{VaR}_{1-\epsilon}^{\mathbb{P}_q}(Q)] $. CVaR can be also formulated as a convex program:
\begin{equation}
\label{eq: cvar_opti_def}    
    \textit{CVaR}_{1-\epsilon}^{\mathbb{P}_q}(Q) := \inf_{t \in \mathbb{R}}[\epsilon^{-1}\mathbb{E}_{\mathbb{P}_q}[(Q+t)_+]-t].
\end{equation}

Both the formulations in \eqref{eq: ccp} and \eqref{eq: cvar_ccp} assume that $\bbP$, the true distribution of $\bfxi$, is known. When this is not the case, one can instead resort to distributionally robust formulations~\cite{Esfahani2018DatadrivenDR,Xie2021OnDR}. 
Assume we only have access to samples $\{\bfxi_i\}_{i \in [N]}$  from the true distribution of $\bfxi$. We describe a way of constructing an ambiguity set of distributions that could have potentially generated such samples. Let $\calP_p(\Xi) \subseteq \calP(\Xi)$ be the set of Borel probability measures with finite $p$-th moment for $p \geq 1$. The $p$-Wasserstein distance \cite{ Hota2019DataDrivenCC} between two probability measures $\mu$, $\nu$ in $\calP_p(\Xi)$ is:
\begin{equation}
\label{eq: wasserstein_def}
    W_{p}(\mu,\nu) := \left(\inf_{\gamma \in \bbQ(\mu,\nu)} \left[ \int_{\Xi \times \Xi} d(x,y)^p \text{d}\gamma(x,y) 
    \right] \right)^{\frac{1}{p}}, 
\end{equation}
where $\bbQ(\mu,\nu)$ denotes the measures on $\Xi \times \Xi$ with marginals $\mu$ and $\nu$ on the first and second factors, and $d$ denotes the metric in the space $\Xi$.

%
% \begin{equation}
%     \hat{\mathbb{P}}_N :=  \frac{1}{N}\sum_{i=1}^N \delta_{\bfxi_i}
% \end{equation}
%

Let $\hat{\mathbb{P}}_N :=  \frac{1}{N}\sum_{i=1}^N \delta_{\bfxi_i}$ denote the discrete empirical distribution constructed from the observed samples $\{\bfxi_i\}_{i=1}^N$. Using the Wasserstein distance \eqref{eq: wasserstein_def}, one can define a Wasserstein ambiguity set of radius $r$ centered at $\hat{\mathbb{P}}_N$: 
\begin{equation} \label{eq:ambiguity-set}
      \calM_{N}^{r} := \{\mu \in \calP_p(\Xi) \; | \; W_p(\mu,\hat{\mathbb{P}}_{N} ) \leq r\},
\end{equation}
and, in turn, a distributionally robust chance-constrained program (DRCCP):
\begin{equation}
\begin{aligned}
\label{eq: drccp_def}
    &\min_{\bfz \in \calZ} \bfc^\top \bfz,  \\
    \mathrm{s.t.} \, \, & \inf_{\bbP \in \calM_{N}^{r}}\mathbb{P}(G(\bfz, \bfxi) \leq 0) \geq 1 - \epsilon. 
\end{aligned}
\end{equation}
The constraint in \eqref{eq: drccp_def} is equivalent to 
$\sup_{\bbP \in \calM_{N}^{r}}\mathbb{P}(G(\bfz, \bfxi) \geq 0) \leq \epsilon$.
Thus, mimicking the convexification for CCP in \eqref{eq: cvar_ccp}, one can use CVaR to obtain a convex approximation of \eqref{eq: drccp_def}: 
\begin{equation}
\begin{aligned}
\label{eq: drccp_cvar_notation}
    &\min_{\bfz \in \calZ} \bfc^\top \bfz,  \\
    \mathrm{s.t.} \, \, & \sup_{\bbP \in \calM_{N}^{r}} \textit{CVaR}_{1-\epsilon}^{\mathbb{P}}(G(\bfz,\bfxi)) \leq 0.
\end{aligned}
\end{equation}
\section{Problem Formulation}\label{sec: problem}

%
% \marginJC{These refs for the lit review in the intro: "Compared with previous works \cite{dhiman2020control, Long2022RAL, Fernando_clf_socp} that deal with model uncertainty in the CLF-CBF-QP formulation, our goal is to further relax the assumptions on known distributions or error bounds of the uncertainty."}
% as follows\NA{Explain error model in more detail, either before or after the eq. Currently it is never explicitly stated that error of the estimated dynamics $\tilde{F}(x)$ is modelled as a linear combination of features.}, 

We study the problem of enforcing safety and stability of control-affine dynamical systems with model uncertainty. Critically, we do not assume that the probability distribution or error bounds for the model uncertainty are known. We model the uncertainty in the system in \eqref{eq: true_dynamic} using a nominal model $\tilde{F}(\bfx)$ and a linear combination of $k$ perturbations:
\begin{equation}\label{eq: uncertain_dynamic_1}
    \dot{\bfx} = F(\bfx)\ubfu = (\tilde{F}(\bfx) + \sum_{j=1}^k W_j(\bfx) \xi_j)\ubfu.
\end{equation}
For $1 \leq j \leq k$, we use $W_j(\bfx) \in \mathbb{R}^{n \times (m+1)}$ to denote the possible model perturbations, and $\bfxi \in \bbR^k$ with elements $\xi_j \in \mathbb{R}$ to denote the corresponding unknown weights. We assume the perturbations $W_j(\bfx)$ are known, while the weights $\bfxi$ are stochastic. We require a set of historical realizations $\{\bfxi_i\}_{i=1}^N$ as training data, which can be obtained from past state-control system trajectories using system identification techniques, e.g., based on neural ordinary differential equations \cite{thai_2022_learn_disturbance} or Koopman operator theory \cite{klus2020data}.

%\TODO{Note that estimating the perturbation matrix $W_j$ and offline uncertainty observations $\{\bfxi_i\}_{i=1}^N$ from state-control trajectories is a system identification problem, and can be solved by several data driven approaches (e.g., deep neural networks \cite{thai_2022_learn_disturbance} or Koopman operator theory \cite{klus2020data}).}

Many control applications require safety and stability guarantees for an uncertain system under online error realizations.
%whose distribution may shifts from the offline uncertainty distributions. 
This motivates us to consider a distributionally robust formulation for online control synthesis. 

% Our formulation uses offline uncertainty observations and enforces stability and safety of the system even if the uncertainty distribution shifts online.  

% \NA{Offline or online? The sentence before the problem statement mentions online.} \KL{The observations are offline, but there are online uncertainty that may not from the offline uncertainty distribution}

\begin{problem}[\textbf{Distributionally Robust Safety and Stability for Uncertain Systems}]\label{prob: data_driven_safety_stability}
Consider a nominal model $\tilde{F}(\bfx)$ and perturbation matrices $W_j(\bfx)$, $j \in [k]$ for the system dynamics in \eqref{eq: true_dynamic}. Given observations $\{\bfxi_i\}_{i=1}^N$ of the model uncertainty $\bfxi$ with support set $\Xi$, design a feedback controller $\underline{\bfk}^*:\bbR^n \mapsto \underline{\calU}$ with a risk-tolerance parameter $\epsilon \in (0,1)$ such that, for each $\bfx \in \calX$:
\begin{align}
    &\inf_{\mathbb{P} \in \calM_{N}^{r_1}}\mathbb{P}(\textit{CLC}(\bfx,\underline{\bfk}^*(\bfx), \bfxi)) \leq \delta ) \geq 1 - \epsilon, \notag\\
    &\inf_{\mathbb{P} \in \calM_{N}^{r_2}}\mathbb{P}(\textit{CBC}(\bfx,\underline{\bfk}^*(\bfx), \bfxi)) \geq 0 ) \geq 1 - \epsilon,
\end{align}
where $\calM_{N}^{r_1}$, $\calM_{N}^{r_2}$ are Wasserstein ambiguity sets with user-specified radii $r_1$ and $r_2$.
\end{problem}

While we do not assume a particular distribution for $\bfxi$, the Wasserstein ball radii $r_1$, $r_2$ specify the maximal shift of the true distribution of $\bfxi$ from the empirical distribution $\hat{\mathbb{P}}_N$ of the historical samples that our method can handle.

We consider two cases based on the information available about the support set $\Xi$. In the first case, we consider a  unbounded support set $\Xi = \bbR^k$; in the second case, we assume a compact polyhedron set $\Xi = \{\bfxi \in \mathbb{R}^k \;|\; \bfC \bfxi \leq \bfd\}$. Inspired by the CLF-CBF-QP in \eqref{eq: QP_origin}, we consider the following DRCCP formulation to enforce safety and stability with high probability and out-of-sample errors by leveraging the CVaR approximations \eqref{eq: drccp_cvar_notation} and the CVaR  definition~\eqref{eq: cvar_opti_def},
% \NA{It is not necessary to repeat the formulation twice. Why not directly use the problem below as the problem statement in Problem 1?}
%
% \begin{subequations}
% \label{eq: drccp_clc_cbc}
% \begin{align}
% & \min_{\ubfu \in \underline{\calU},\delta \in \bbR}\,\, \|L(\bfx)^\top(\ubfu - \tilde{\underline{\bfk}}(\bfx))\|^2 + \lambda \delta^2, \notag \\
% % \bfu^T H \bfu + p_d \delta^2 \\
% \mathrm{s.t.} \, \,  &\inf_{\mathbb{P} \in \calM_{N}^{r_1}}\mathbb{P}(\textit{CLC}(\bfx,\ubfu,\bfxi) \leq \delta ) \geq 1 - \epsilon, \label{eq: clc_constraint}\\
% &\inf_{\mathbb{P} \in \calM_{N}^{r_2}}\mathbb{P}(\textit{CBC}(\bfx,\ubfu,\bfxi) \geq 0 ) \geq 1 - \epsilon \label{eq: cbc_constraint}.
% \end{align}
% \end{subequations}
%
% According to Sec.~\ref{sec: drccp_prelim}, the DRCCP \eqref{eq: drccp_clc_cbc} can be approximated conservatively using CVaR as follows, 
%
\begin{align}
\label{eq: clc_cbc_cvar_approximate}
& \min_{\ubfu \in \underline{\calU},\delta \in \bbR}\,\, \|\ubfu - \underline{\bfk}(\bfx)\|^2 + \lambda \delta^2 \\
\mathrm{s.t.} \, \, &\sup_{\mathbb{P}\in \calM_{N}^{r_1}}\inf_{t \in \mathbb{R}}[\epsilon^{-1}\mathbb{E}_{\bbP}[(\textit{CLC}(\bfx,\ubfu,\bfxi)+t-\delta)_{+}] -t] \leq 0, \notag \\
&\sup_{\mathbb{P}\in \calM_{N}^{r_2}}\inf_{t \in \mathbb{R}}[\epsilon^{-1}\mathbb{E}_{\bbP}[(-\textit{CBC}(\bfx,\ubfu,\bfxi)+t)_{+}] -t]  \!\leq \! 0  \notag .
\end{align}
Although the constraints in~\eqref{eq: clc_cbc_cvar_approximate} are convex, the program is intractable~\cite{Hota2019DataDrivenCC, Esfahani2018DatadrivenDR} due to the search of suprema over the Wasserstein ambiguity set. In the following sections, we discuss our approach to identify tractable reformulations of~\eqref{eq: clc_cbc_cvar_approximate} and enable online stable and safe control synthesis.

\section{Tractable Reformulation of Control Synthesis With Model Uncertainty}
\label{sec: method}

% \NA{The title of this section is missing something. Reformulation of what?}

This section presents our approach for solving \eqref{eq: clc_cbc_cvar_approximate}. To simplify the notation, we use the vectorization of $F(\bfx)$,
\begin{align}
\label{eq: F_vectorization}
    \text{vec}(F(\bfx))
    % = \text{vec}(\tilde{F}(\bfx)+\sum_{i=1}^k W_i(\bfx)\xi_i) 
    % & = \text{vec}(\tilde{F}(\bfx)) + \sum_{i=1}^k \xi_i\text{vec}(W_i(\bfx)) \notag \\
    = \text{vec}(\tilde{F}(\bfx)) + \bfW(\bfx) \bfxi,
\end{align}
where 
\begin{align*}
    \bfW(\bfx) &= \left[
    \text{vec}(W_1(\bfx)) \; \; \cdots \;\;  \text{vec}(W_k(\bfx)) \right] \in \bbR^{n(m+1) \times k}.
\end{align*}
Observe that the $\textit{CBC}$ expression in \eqref{eq:cbc_define} is affine in both $\ubfu$ and $\bfxi$. 
% \NA{This sentence should guide the reader about our purpose. We already wrote down the CBC and CLC expressions in Sec. 2. What are we trying to do here? Emphasize that we observe that the CLC and CBC expressions are affine in both $\ubfu$ and $\bfxi$.} 
Using the Kronecker product property $\text{vec}(\bfA\bfB\bfC) = (\bfC^\top \otimes \bfA)\text{vec}(\bfB)$ and $\text{vec}(F(\bfx))$ in \eqref{eq: F_vectorization}, we have:
\begin{align}
\label{eq: cbc_kronecker}
    &\textit{CBC}(\bfx,\ubfu,\bfxi) = [\nabla_{\bfx} h(\bfx)]^\top F(\bfx)\ubfu + \alpha_h(h(\bfx)) \notag \\
     &= \ubfu^\top \text{vec}([\nabla_{\bfx} h(\bfx)]^\top F(\bfx)\bfI_{m+1}) + \alpha_h(h(\bfx)) \notag \\
    &= \ubfu^\top(\bfI_{m+1} \otimes [\nabla_{\bfx} h(\bfx)]^\top)\text{vec}(F(\bfx)) + \alpha_h(h(\bfx)) \notag \\
    % &= \ubfu^\top(\bfI_{m+1} \otimes [\nabla_{\bfx} h(\bfx)]^\top)\text{vec}(\tilde{F}(\bfx)+\sum_{i=1}^k W_i(\bfx)\xi_i) + \alpha_h(h(\bfx)) \notag \\
    & = \ubfu^\top\underbrace{(\bfI_{m+1} \otimes [\nabla_{\bfx} h(\bfx)]^\top)\text{vec}(\tilde{F}(\bfx))}_{\bfq_h(\bfx)}+ \alpha_h(h(\bfx)) + \notag \\
    & \qquad \ubfu^\top \underbrace{(\bfI_{m+1} \otimes [\nabla_{\bfx} h(\bfx)]^\top) \bfW(\bfx)}_{\bfR_h(\bfx)} \bfxi  \notag \\
    & = \ubfu^\top \bfq_h(\bfx) + \ubfu^\top \bfR_h(\bfx) \bfxi + \alpha_h(h(\bfx)).
\end{align}
We can also write $\textit{CLC}(\bfx,\ubfu,\bfxi) = \ubfu^\top \bfq_V(\bfx) +  \ubfu^\top \bfR_V(\bfx) \bfxi + \alpha_V(V(\bfx))$ with similar definitions. Since $\tilde{F}(\bfx)$, $h(\bfx)$, $V(\bfx)$, and $\bfW(\bfx)$ are known and deterministic, both $\textit{CBC}(\bfx,\ubfu, \bfxi)$ and $\textit{CLC}(\bfx,\ubfu, \bfxi)$ are affine in $\bfxi$.

%Since both the CLF and CBF constraints have the same affine form, 

We consider a general optimization program:
\begin{align}
\label{eq: general_control_synthesis}
& \min_{\ubfu \in \underline{\calU}}\,\, \|\ubfu - \underline{\bfk}(\bfx)\|^2,  \\
% \bfu^T H \bfu + p_d \delta^2 \\
\mathrm{s.t.} \, \,  &\sup_{\mathbb{P}\in \calM_{N}^{r}}\inf_{t \in \mathbb{R}}[\epsilon^{-1}\mathbb{E}_{\bbP}[(G_l (\bfx, \ubfu, \bfxi)+t_l)_{+}] -t_l]  \leq  0, \; \forall l \in [M], \notag
\end{align}
%
% \marginJC{We say this is more general, but we only consider 1 constraint? (because $G$ takes values in $\bbR$. Or do you mean to say in $\bbR^2$?)} \NA{Jorge's comment can be addressed by subscripting $G_l$ for $l \in [L]$, e.g., $G_l (\bfx, \ubfu, \bfxi) = q_L(\ubfu, \bfx) + \ubfu^\top \bfR_l(\bfx) \bfxi$ and short-hand $G_l(i)$.}
%
where $G_l: \calX \times \underline{\calU} \times \Xi \mapsto \bbR$ may represent a safety or stability constraint that is affine in $\bfxi$:
%constraints above (e.g., safety, stability), in which case it takes the affine form, 
% \NA{The functions $q$ and $\bfR$ below should have subscripts $l$. Also, the use of $L$ for number of constraints clashes with the Lipschitz constant in (21).}
\begin{equation}
\label{eq: G_def}  
G_l (\bfx, \ubfu, \bfxi) =\ubfu^\top \bfq_l(\bfx) + \ubfu^\top \bfR_l(\bfx) \bfxi + \alpha_l(J_l(\bfx)),
\end{equation}
and $J_l$ is used to represent the certificate function (e.g. CLF, CBF). We write $G_l(\bfx, \ubfu, \bfxi_i)$ as $G_l(i)$ for brevity. Depending on the information available about the uncertainty space $\Xi$, we propose two reformulations of \eqref{eq: general_control_synthesis}. In either case, we assume the metric $d$ of $\Xi$ is the Euclidean distance.

\subsection{Reformulation with Unbounded Uncertainty Space}
\label{sec: convex_formulate}
First, we consider the case with no prior knowledge of $\Xi$, meaning that $\Xi = \bbR^k$. We show that the constraints in \eqref{eq: general_control_synthesis} can be reformulated as second-order cone constraints.

\begin{proposition}[\textbf{DRCCP formulation with unbounded support set}]
\label{proposition: convex_final_qp}
Consider the optimization problem in \eqref{eq: general_control_synthesis} with $G_l$ in \eqref{eq: G_def}, $p$-Wasserstein distance with $p=1$, and $\Xi = \bbR^{k}$. Then, the following SOCP is equivalent to \eqref{eq: general_control_synthesis}:
\begin{align}
\label{eq: CBF_Convex_final}
& \min_{\ubfu \in \underline{\calU}, y \in \bbR, t_l \in \bbR, s_l(i) \in \bbR}\,\, y  \\
\mathrm{s.t.} \, \,  
& r \|\ubfu^\top \bfR_l(\bfx)\| + \frac{1}{N} \sum_{i=1}^{N} s_l(i) - t_l\epsilon \leq 0, \notag  \\
& s_l(i) \geq G_l(i)+t_l, \; \; s_l(i) \geq 0, \; \; \forall i \in [N], \;\; \forall l \in [M],  \notag  \\
&y+1 \geq
    \sqrt{ \|2(\ubfu-\underline{\bfk}(\bfx))\|^2  + (y-1)^2}. \notag
\end{align}
\end{proposition}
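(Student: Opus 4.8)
The plan is to treat each constraint $l \in [M]$ of \eqref{eq: general_control_synthesis} in isolation, rewrite its left-hand side as a worst-case Conditional Value-at-Risk, apply a known Wasserstein strong-duality reformulation, and then convert the quadratic objective to a linear one via a standard second-order cone epigraph trick. First, note that by the CVaR formula \eqref{eq: cvar_opti_def}, the inner quantity $\inf_{t_l \in \bbR}[\epsilon^{-1}\bbE_{\bbP}[(G_l(\bfx,\ubfu,\bfxi)+t_l)_+] - t_l]$ equals $\textit{CVaR}_{1-\epsilon}^{\bbP}(G_l(\bfx,\ubfu,\bfxi))$, so the $l$-th constraint reads $\sup_{\bbP \in \calM_N^{r}}\textit{CVaR}_{1-\epsilon}^{\bbP}(G_l(\bfx,\ubfu,\bfxi)) \le 0$. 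I would then interchange $\sup_{\bbP}$ and $\inf_{t_l}$: the bracketed objective is affine (hence concave and weakly upper semicontinuous) in $\bbP$ over the weakly compact convex set $\calM_N^{r}$ and convex in $t_l$, so a Sion-type minimax argument — exactly the device underlying the worst-case CVaR reformulations in \cite{Esfahani2018DatadrivenDR, Hota2019DataDrivenCC} — yields $\sup_{\bbP}\inf_{t_l} = \inf_{t_l}\sup_{\bbP}$.

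Next, for fixed $t_l$ I would reformulate $\sup_{\bbP \in \calM_N^{r}}\bbE_{\bbP}[(G_l(\bfx,\ubfu,\bfxi)+t_l)_+]$. By \eqref{eq: G_def}, $G_l$ is affine in $\bfxi$ with $\nabla_{\bfxi} G_l = \bfR_l(\bfx)^\top\ubfu$, so the integrand $\bfxi \mapsto (G_l(\bfx,\ubfu,\bfxi)+t_l)_+$ is a maximum of two affine functions, hence convex and globally Lipschitz with constant $\|\ubfu^\top \bfR_l(\bfx)\|$ in the Euclidean metric. Invoking the Kantorovich/Gao--Kleywegt duality for worst-case expectations over $1$-Wasserstein balls \cite{Esfahani2018DatadrivenDR, Hota2019DataDrivenCC}, this supremum equals $\inf_{\lambda \ge 0}\{\lambda r + \frac{1}{N}\sum_{i=1}^N \sup_{\bfxi \in \bbR^k}[(G_l(i)+t_l)_+ - \lambda\|\bfxi - \bfxi_i\|]\}$. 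Here the unboundedness $\Xi = \bbR^k$ is used crucially: the inner supremum is finite — and equals $(G_l(i)+t_l)_+$, attained at $\bfxi=\bfxi_i$ — exactly when $\lambda \ge \|\ubfu^\top\bfR_l(\bfx)\|$, and is $+\infty$ otherwise. Since $r>0$, the outer infimum over $\lambda$ is attained at $\lambda = \|\ubfu^\top\bfR_l(\bfx)\|$, giving $\sup_{\bbP \in \calM_N^{r}}\bbE_{\bbP}[(G_l(\bfx,\ubfu,\bfxi)+t_l)_+] = r\|\ubfu^\top\bfR_l(\bfx)\| + \frac{1}{N}\sum_{i=1}^N (G_l(i)+t_l)_+$. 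Substituting back, the $l$-th constraint becomes $\inf_{t_l \in \bbR}\{\epsilon^{-1}[r\|\ubfu^\top\bfR_l(\bfx)\| + \frac{1}{N}\sum_i (G_l(i)+t_l)_+] - t_l\} \le 0$. The minimized function of $t_l$ is convex, continuous, and coercive (it tends to $+\infty$ as $t_l\to -\infty$ and, using $\epsilon<1$, also as $t_l\to+\infty$), so the infimum is attained; hence the constraint is equivalent to the existence of $t_l\in\bbR$ with $r\|\ubfu^\top\bfR_l(\bfx)\| + \frac{1}{N}\sum_i (G_l(i)+t_l)_+ - \epsilon t_l \le 0$. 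Introducing $s_l(i) \ge (G_l(i)+t_l)_+$, equivalently $s_l(i) \ge G_l(i)+t_l$ and $s_l(i) \ge 0$ — which is lossless since the constraint is monotone nonincreasing in $s_l(i)$ — reproduces the first two lines of \eqref{eq: CBF_Convex_final}.

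Finally, I would replace the quadratic objective $\|\ubfu - \underline{\bfk}(\bfx)\|^2$ by the linear objective $y$ together with the epigraph constraint $y \ge \|\ubfu-\underline{\bfk}(\bfx)\|^2$, and rewrite the latter as the second-order cone constraint $y+1 \ge \sqrt{\|2(\ubfu-\underline{\bfk}(\bfx))\|^2 + (y-1)^2}$ using the identity $y \ge \|\bfv\|^2 \iff y+1 \ge \sqrt{\|2\bfv\|^2 + (y-1)^2}$ (valid since any feasible $y$ satisfies $y+1 \ge 0$). Collecting all reformulated constraints, together with the unchanged linear restriction $\ubfu \in \underline{\calU}$, yields precisely \eqref{eq: CBF_Convex_final}, establishing the claimed equivalence. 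The step I expect to be the main obstacle is the rigorous justification of the two strong-duality manipulations — the $\sup_{\bbP}$--$\inf_{t_l}$ interchange and the Wasserstein worst-case-expectation reformulation — namely verifying the Lipschitz and weak-compactness hypotheses under which the cited theorems apply, and checking carefully that unboundedness of $\Xi$ is exactly what collapses the dual variable $\lambda$ to $\|\ubfu^\top\bfR_l(\bfx)\|$ (this being the feature that separates the present case from the polyhedral-support case handled subsequently).
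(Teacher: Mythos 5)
Your proposal is correct and follows the same three-stage route as the paper's proof: collapse the worst-case expectation over the $1$-Wasserstein ball into the empirical average plus $r$ times the Lipschitz constant $\|\ubfu^\top \bfR_l(\bfx)\|$, linearize the positive parts via the epigraph variables $s_l(i)$, and convert the quadratic objective into a linear objective with a second-order cone constraint. The only substantive difference is that where the paper invokes Lemma~V.8 of \cite{Hota2019DataDrivenCC} as a black box, you rederive that step from Kantorovich/Gao--Kleywegt duality, showing explicitly that unboundedness of $\Xi$ forces the dual multiplier to collapse to the Lipschitz constant; your justifications of the attainment of the infimum over $t_l$ (via coercivity, using $\epsilon<1$) and of the losslessness of the $s_l(i)$ relaxation (via monotonicity of the constraint in $s_l(i)$) are, if anything, more careful than the paper's two-direction comparison of optimal solutions.
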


\begin{proof}
We start by considering the following program:
\begin{align}
\label{eq: CBF_Convex_Reformulation}
& \min_{\ubfu \in \underline{\calU}}\,\, \|\ubfu - \underline{\bfk}(\bfx)\|^2\\
\mathrm{s.t.} \, \,  
& r \|\ubfu^\top \bfR_l(\bfx)\| + \inf_{t \in \mathbb{R}}\left[\frac{1}{N} \sum_{i=1}^{N} (G_l(i) + t_l)_+ - t_l \epsilon\right] \leq 0. \notag  
\end{align}
Based on \cite[Lemma V.8]{Hota2019DataDrivenCC} and assuming $\Xi = \bbR^k$, the supremum over
the Wasserstein ambiguity set (i.e. the constraint in \eqref{eq: general_control_synthesis}) can be written equivalently as the sample average $\inf_{t \in \mathbb{R}}\left[\frac{1}{N} \sum_{i=1}^{N} (G_l(i) + t_l)_+ - t_l \epsilon\right]$ and a regularization term $r L(\ubfu,\bfx)$, where $L(\ubfu,\bfx)$ denotes the Lipschitz constant of $G_l$ in $\bfxi$. 

As defined in \eqref{eq: G_def}, for each $\bfx$, we can define the convex function $L: \underline{\calU} \times \calX \mapsto \mathbb{R}_{>0}$ by
\begin{equation}
\label{eq: L_h_define}
    L(\ubfu, \bfx) = \|\ubfu^\top \bfR_l(\bfx)\|.
\end{equation}
%
% Here, $\ubfu^\top \bfR_l(\bfx)_j$ denotes the $j$-th element of the row vector. 
Then, the function $\bfxi \mapsto G_l (\bfx, \ubfu, \bfxi)$ is Lipschitz in $\bfxi$ with constant $L(\ubfu, \bfx)$ for fixing $\bfx$ (assuming $L(\ubfu,\bfx) < \infty$).
% \NA{Rewrite this sentence. It is written in a too complex of a way. The next sentence can be simplified too.}
This is because the Lipschitz constant of a differentiable affine function equals the dual-norm of its gradient \cite{shai_convex}, and the dual norm of the $L_2$ norm is itself. This implies that \eqref{eq: CBF_Convex_Reformulation} is equivalent to \eqref{eq: general_control_synthesis}.

% Note that the constraint in \eqref{eq: CBF_Convex_Reformulation} is the sample average 
%To further enable online control synthesis, we consider reformulating 
Next, we show that the bi-level optimization in \eqref{eq: CBF_Convex_Reformulation} is equivalent to:
\begin{align}
\label{eq: CBF_Convex_final_qp}
& \min_{\ubfu \in \underline{\calU}, t_l \in \bbR, s_l(i) \in \bbR}\,\, \|\ubfu - \underline{\bfk}(\bfx)\|^2  \\
\mathrm{s.t.} \, \,  
& r \|\ubfu^\top \bfR_l(\bfx)\| + \frac{1}{N} \sum_{i=1}^{N} s_l(i) - t_l \epsilon \leq 0, \notag  \\
& s_l(i) \geq G_l(i)+t_l, \; \; s_l(i) \geq 0, \; \; \forall i \in [N], \; \; \forall l \in [M].  \notag  
\end{align}
For $i \in [N], l \in [M]$, let $(\ubfu_1, t_l^*, s_l(i)^*)$ denote an optimal solution to \eqref{eq: CBF_Convex_final_qp} and $\ubfu_2$ an optimal solution to \eqref{eq: CBF_Convex_Reformulation}, with $\hat{t}_l$ the optimizer for the $\inf$ terms in the constraint of \eqref{eq: CBF_Convex_Reformulation}.

Given $(\ubfu_1, t_l^*, s_l(i)^*)$, we have $s_l(i)^* \geq (G_l(i)+t_l^*)_+$ and
\begin{align}
\label{eq: if_prove}
    &r \|\ubfu_1^\top \bfR_l(\bfx)\| + \frac{1}{N} \sum_{i=1}^{N} s_l(i)^* - t_l^* \epsilon \leq 0.
    % &s_l(i)^* \geq G_l(i)+t^*, \; \; s_l(i) \geq 0,  
\end{align}
Thus, if $s_l(i)^*$ is replaced by $(G_l(i)+t_l^*)_+$ in \eqref{eq: if_prove}, we conclude that the constraint in \eqref{eq: CBF_Convex_Reformulation} is satisfied with $\ubfu_1$ and $t_l^*$.
%
% \begin{equation*}
%     r_2 L_h(\ubfu_1) + \inf_{t \in \mathbb{R}}\left[\frac{1}{N} \sum_{i=1}^{N} (-\textit{CBC}_i + t)_+ - t \epsilon\right] \leq 0. 
% \end{equation*}
%
This implies that $\ubfu_1$ is also a solution to \eqref{eq: CBF_Convex_Reformulation}, and the cost satisfies $\|\ubfu_1 - \underline{\bfk}(\bfx)\|^2 \geq \|\ubfu_2 - \underline{\bfk}(\bfx)\|^2 $.

% \NA{More precisely $\ubfu_1$ is feasible in \eqref{eq: CBF_Convex_Reformulation} and its cost satisfies $\|L(\bfx)^\top(\ubfu_1 - \tilde{\underline{\bfk}}(\bfx))\|^2 \geq \|L(\bfx)^\top(\ubfu_2 - \tilde{\underline{\bfk}}(\bfx))\|^2$, where $\ubfu_2$ is an optimal solution to \eqref{eq: CBF_Convex_Reformulation}. Introduce $(\ubfu_1, t_1)$ and $(\ubfu_2, t*)$ in the very beginning of the proof and show the cost relationship in addition to feasibility in the if and only if parts of the proof.}. 

Given $\ubfu_2$ and $\hat{t}_l$, for every $i \in [N]$, we choose 
% \begin{equation*}
% \label{eq: choose_si}
$\hat{s}_l(i) = (G_l(i) + \hat{t}_l)_+$.
% \hat{s_l(i)} = \begin{cases}
% G_l(i) + \hat{t}, & \text{if} \; G_l(i) + \hat{t} > 0, \\
% 0, &  \text{if} \; G_l(i) + \hat{t} \leq 0.
% \end{cases}
%\end{equation*} 
%
This implies $\hat{s}_l(i) \geq G_l(i) + \hat{t}_l$,  $\hat{s}_l(i) \geq 0$, and the first constraints in \eqref{eq: CBF_Convex_final_qp} is satisfied since
\begin{align*}
    &r \|\ubfu_2^\top \bfR_l(\bfx)\| + \frac{1}{N} \sum_{i=1}^{N} (G_l(i) + \hat{t}_l )_+ - \hat{t}_l \epsilon \leq 0.
\end{align*}
Thus, $(\ubfu_2,\hat{t}, \hat{s_l(i)})$ is also a solution to \eqref{eq: CBF_Convex_final_qp}. Furthermore, the cost satisfies $\|\ubfu_1 - \underline{\bfk}(\bfx)\|^2 \leq \|\ubfu_2 - \underline{\bfk}(\bfx)\|^2  $ since $(\ubfu_1, t_l^*,s_l(i)^*)$ is an optimal solution to \eqref{eq: CBF_Convex_final_qp}. Therefore, both costs are equal, and \eqref{eq: CBF_Convex_final_qp} and \eqref{eq: CBF_Convex_Reformulation} are equivalent.

Finally, by reformulating the objective function of \eqref{eq: CBF_Convex_final_qp} as a linear objective with an SOC constraint \cite[Proposition IV.3]{Long2022RAL}, we conclude the SOCP \eqref{eq: CBF_Convex_final} is equivalent to \eqref{eq: general_control_synthesis}.
%and can be solved efficiently online. 
% We rewrite the ReLU term $(G_l(i)+t)_+$ equivalently as the following mixed-integer inequalities:
% %
% \begin{align}
% \label{eq: relu_reformulate}
%     &s_l(i) \geq G_l(i)+t, \quad s_l(i) \geq 0, \quad s_l(i) \leq M(1-z), \\
%     &s_l(i) \leq G_l(i)+t + Mz,  \quad z \in \{0,1\}, \quad M \in \bbR, \quad M \gg 1. \notag
% \end{align}
% Note that the mixed-integer reformulation is valid only if $M \geq G_l(i) + t$. We thus set $M$ large in practice since we do not know the explicit value of $G_l(i) + t$ when solving the program. From \eqref{eq: CBF_Convex_remove_inf} and \eqref{eq: relu_reformulate}, we conclude that \eqref{eq: CBF_Convex_final} is equivalent to \eqref{eq: general_control_synthesis}.
\end{proof}

Proposition~\ref{proposition: convex_final_qp} allows control synthesis with distributionally robust safety and stability constraints without prior knowledge about the uncertainty support set $\Xi$. The SOCP in \eqref{eq: CBF_Convex_final} can be solved efficiently online using an off-the-shelf solver (e.g. \cite{mosek}).

\subsection{Reformulation with Bounded Uncertainty Space}

Assuming no prior knowledge about the uncertainty set $\Xi$ may result in an overly conservative controller. This motivates us to also consider the case that the uncertainty support set $\Xi$ is a compact polyhedron.

%In section.~\ref{sec: convex_formulate}, we assume the uncertainty space is $\mathbb{R}^{k}$. However, this assumption may result in unexpected conservatism in the controller synthesis in practical. 
%This motivates us to consider the case that the uncertainty support set $\Xi$ is a compact polyhedron set. 

% \NA{Add more discussion before presenting the proposition. What is the key idea we exploit? How come with unbounded uncertainty we could get a QP but now we get a SOCP? Is it not possible to simply add the $\bfxi$ constraints to (22) and obtain a QP?}

% Given the expression of $G_l$ in \eqref{eq: G_def} and let $\Xi = \{\bfxi \in \mathbb{R}^k \;|\; \bfC \bfxi \leq \bfd\}$ be compact, where $\bfC \in \mathbb{R}^{q \times k}$ and $\bfd \in \mathbb{R}^q$ for some $q > 0$. Consider the $p$-Wasserstein distance with $p=1$, the following SOCP is equivalent to \eqref{eq: general_control_synthesis},

\begin{proposition}[\textbf{DRCCP formulation with bounded polyhedron support set}]
\label{proposition: affine_final_socp}
Consider the optimization problem in \eqref{eq: general_control_synthesis} with $G_l$ in \eqref{eq: G_def}, $p$-Wasserstein distance with $p=1$, and compact $\Xi = \{\bfxi \in \mathbb{R}^k \;|\; \bfC \bfxi \leq \bfd\}$, where $\bfC \in \mathbb{R}^{q \times k}$ and $\bfd \in \mathbb{R}^q$ for some $q > 0$. Then, the following SOCP is equivalent to \eqref{eq: general_control_synthesis},
\begin{align}
\label{eq: CBF_Affine_Reformulation_final}
& \min_{\ubfu \in \underline{\calU}, y \in \mathbb{R}, t_l \in \bbR, s_l(i) \in \bbR, \beta \in \bbR_{\geq 0},  \bfeta_{i} \in \bbR^q} \, \, y \qquad \qquad \\
\mathrm{s.t.} \, \, 
& \beta r + \frac{1}{N}\sum_{i=1}^N s_l(i) - t_l\epsilon \leq 0, \notag\\
& s_l(i) \geq 0, \notag \\
& s_l(i) \geq \ubfu^\top \bfq_l(\bfx) + t_l + (\ubfu^\top \bfR_l(\bfx) - \bfeta_{i}^\top \bfC) \bfxi_i + \bfeta_{i}^\top \bfd + \notag \\
&\alpha_l(J_l(\bfx)), \notag \\
& \|\ubfu^\top \bfR_l(\bfx) - \bfeta_{i}^\top \bfC \| \leq \beta, \; \; \bfeta_{i} \geq \boldsymbol{0}_q, \; \; \forall i \in [N], \; \;\forall l \in [M], \notag \\
% &\left\| \begin{bmatrix} 2L(\bfx)^\top(\ubfu-\underline{\tilde{\bfk}}(\bfx))\\ l-1 \end{bmatrix} \right\|  \leq l+1,  \\
&y+1 \geq
    \sqrt{ \|2(\ubfu-\underline{\bfk}(\bfx))\|^2  + (y-1)^2}. \notag
\end{align}
\end{proposition}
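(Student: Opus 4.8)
The plan is to mirror the three-stage argument of Proposition~\ref{proposition: convex_final_qp}, but replace the ``unbounded support'' reformulation of the Wasserstein-robust CVaR constraint (Lemma V.8 of \cite{Hota2019DataDrivenCC}) with the version that handles a polyhedral support set. First I would invoke the appropriate result from \cite{Hota2019DataDrivenCC} (the polyhedral-support analogue of Lemma V.8, proved via Lagrangian duality of the inner worst-case expectation over $\calM_N^r$) to rewrite, for each $l \in [M]$, the constraint $\sup_{\bbP \in \calM_N^{r}} \inf_{t_l}[\epsilon^{-1}\bbE_\bbP[(G_l(\bfx,\ubfu,\bfxi)+t_l)_+] - t_l] \le 0$ as the finite convex system
\begin{align*}
& \beta r + \tfrac{1}{N}\textstyle\sum_{i=1}^N s_l(i) - t_l \epsilon \le 0, \quad s_l(i) \ge 0,\\
& s_l(i) \ge \big(G_l(i) + t_l\big) + \sup_{\bfxi \in \Xi}\big\{(\ubfu^\top\bfR_l(\bfx))(\bfxi - \bfxi_i) : \text{absorbed into dual}\big\},\\
& \|\ubfu^\top \bfR_l(\bfx) - \bfeta_i^\top \bfC\| \le \beta,\quad \bfeta_i \ge \boldsymbol{0}_q,
\end{align*}
where $\bfeta_i$ is the dual variable for the constraint $\bfC\bfxi \le \bfd$ at sample $i$ and $\beta$ upper-bounds the relevant dual norms. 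The key identity here is the LP-duality evaluation of the support-function term: for the affine $G_l$ in \eqref{eq: G_def}, $\sup_{\bfxi : \bfC\bfxi \le \bfd}\,(\ubfu^\top\bfR_l(\bfx) - \bfeta_i^\top\bfC)\bfxi$ is finite and equals a linear expression in $\bfeta_i, \bfd$ once one plugs in the optimal transport structure, which is exactly what produces the term $(\ubfu^\top\bfR_l(\bfx) - \bfeta_i^\top\bfC)\bfxi_i + \bfeta_i^\top\bfd$ in the third constraint of \eqref{eq: CBF_Affine_Reformulation_final} after substituting $G_l(i) = \ubfu^\top\bfq_l(\bfx) + \ubfu^\top\bfR_l(\bfx)\bfxi_i + \alpha_l(J_l(\bfx))$.

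Second, I would carry out the epigraph/slack-variable step exactly as in the proof of Proposition~\ref{proposition: convex_final_qp}: show the bi-level program with the inner $\inf_{t_l}$ is equivalent to the program where $t_l$ is promoted to a decision variable and $(G_l(i)+t_l)_+$ is replaced by a slack $s_l(i)$ constrained by $s_l(i) \ge 0$ and $s_l(i) \ge$ (the affine lower bound). The forward direction is immediate since any feasible $(\ubfu, t_l, s_l(i))$ has $s_l(i) \ge (\cdot)_+$, so the original constraint holds a fortiori; the reverse direction picks $\hat s_l(i)$ equal to the positive part at the optimal $\hat t_l$, which is feasible and attains the same objective. Since the objective $\|\ubfu - \underline{\bfk}(\bfx)\|^2$ does not involve the new variables, the optimal values coincide. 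Third, I would convert the quadratic objective into a linear objective $y$ with the rotated SOC constraint $y + 1 \ge \sqrt{\|2(\ubfu - \underline{\bfk}(\bfx))\|^2 + (y-1)^2}$ via \cite[Proposition IV.3]{Long2022RAL}, and observe that all remaining constraints are linear or of the form $\|\cdot\| \le \beta$, hence second-order-cone representable. Combining these steps gives equivalence of \eqref{eq: CBF_Affine_Reformulation_final} with \eqref{eq: general_control_synthesis}.

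The main obstacle I anticipate is the first step: correctly stating and applying the polyhedral-support reformulation from \cite{Hota2019DataDrivenCC} and tracking the dual variables so that the support-function term collapses to precisely $(\ubfu^\top\bfR_l(\bfx) - \bfeta_i^\top\bfC)\bfxi_i + \bfeta_i^\top\bfd$ with the single coupling bound $\|\ubfu^\top\bfR_l(\bfx) - \bfeta_i^\top\bfC\| \le \beta$. One has to be careful that the $1$-Wasserstein dual uses the Euclidean metric on $\Xi$ (so the transport-cost penalty contributes the $\beta r$ term with $\beta$ bounding an $L_2$ norm, its own dual norm), that the constraint $\bfC\bfxi \le \bfd$ at each sample requires a separate multiplier $\bfeta_i \ge \boldsymbol{0}_q$, and that feasibility/finiteness of the inner supremum is guaranteed by compactness of $\Xi$. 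The slack-variable and SOC-reformulation steps are routine once this is in place, being essentially verbatim repetitions of the corresponding parts of the proof of Proposition~\ref{proposition: convex_final_qp}.
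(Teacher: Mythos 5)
Your proposal is correct and follows essentially the same route as the paper: the paper likewise invokes the polyhedral-support duality result (citing Proposition V.1 of Hota et al. and Corollary 5.1 of Mohajerin Esfahani--Kuhn) to obtain the intermediate program with dual variables $\bfeta_i$ and $\beta$, splits the ReLU inequality $(\cdot)_+ \leq s_l(i)$ into the two linear constraints, and converts the quadratic objective to the rotated second-order cone form exactly as in Proposition~\ref{proposition: convex_final_qp}. The only cosmetic difference is that you re-derive the slack/epigraph equivalence in full, whereas the paper's cited intermediate program already carries the slacks, so only the ReLU split is needed there.
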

\begin{proof}
Based on \cite[Proposition V.1]{Hota2019DataDrivenCC} and \cite[Corollary 5.1]{Esfahani2018DatadrivenDR}, we know the following program is equivalent to \eqref{eq: general_control_synthesis},
\begin{align}
\label{eq: CBF_Affine_Reformulation}
& \min_{\ubfu \in \underline{\calU},  t_l \in \bbR, s_l(i) \in \bbR, \beta \in \bbR_{\geq 0},  \bfeta_i \in \bbR^q} \, \,\|\ubfu - \underline{\bfk}(\bfx)\|^2 \\
\mathrm{s.t.} \, \, 
& \beta r + \frac{1}{N}\sum_{i=1}^N s_l(i) - t_l \epsilon \leq 0, \notag\\
& (\ubfu^\top \bfq_l(\bfx) + t_l + (\ubfu^\top \bfR_l(\bfx) - \bfeta_{i}^\top \bfC ) \bfxi_i + \bfeta_{i}^\top \bfd +\notag\\ 
& \alpha_l(J_l(\bfx)))_+ \leq s_l(i), \notag \\
& \|\ubfu^\top \bfR_l(\bfx) - \bfeta_{i}^\top \bfC \| \leq \beta, \quad \bfeta_{i} \geq \boldsymbol{0}_q, \quad \forall i \in [N] \notag 
\end{align}
Next, we aim to rewrite \eqref{eq: CBF_Affine_Reformulation} as a SOCP. The ReLU-type inequality 
$(w_i)_+ = (\ubfu^\top \bfq_l(\bfx) + t + (\ubfu^\top \bfR_l(\bfx) - \bfeta_i^\top \bfC) \bfxi_i + \bfeta_i^\top \bfd + \alpha_l(J_l(\bfx)))_+ \leq s_l(i)$
%
% \begin{equation*}
%     (w_i)_+ = (\ubfu^\top \bfq_l(\bfx) + t + (\ubfu^\top \bfR_l(\bfx) - \bfeta_i^\top \bfC) \bfxi_i + \bfeta_i^\top \bfd)_+ \leq s_l(i)
% \end{equation*}
%
can be written equivalently as two constraints: 
%
%\begin{equation*}
    $s_l(i) \geq w_i$ and $s_l(i) \geq 0$.
%\end{equation*} 
%
Following the same technique as in Proposition~\ref{proposition: convex_final_qp}, we conclude that \eqref{eq: CBF_Affine_Reformulation_final} is equivalent to \eqref{eq: general_control_synthesis}.
\end{proof}

% \KL{Need some discussions about the following remark}
%Note that, in Proposition~\ref{proposition: affine_final_socp}, if we set $\bfC$ equal to the zero matrix and $\bfd$ to  the zero vector, then $\Xi = \bbR^k$, and the optimization program \eqref{eq: CBF_Affine_Reformulation_final} reduces to \eqref{eq: CBF_Convex_final}.  \hfill $\bullet$}

\begin{remark}[\textbf{Comparison between the two formulations}] 
{\rm If $\bfC = \mathbf{0}_{q \times k}$ and $\bfd = \mathbf{0}_q$ in Proposition~\ref{proposition: affine_final_socp}, then $\Xi = \bbR^k$ and the SOCP in \eqref{eq: CBF_Affine_Reformulation_final} reduces to \eqref{eq: CBF_Convex_final}. \hfill $\bullet$}
\end{remark}

\begin{remark}[\textbf{Different choice of metric $d$}]
{\rm If instead of $L_2$ norm, we take the metric $d$ of $\Xi$ to be the $L_1$ norm, then the optimization problems in Propositions~\ref{proposition: convex_final_qp} and \ref{proposition: affine_final_socp} become QPs. Details are provided in Appendix~\ref{sec: appendix_l1_qp}. \hfill $\bullet$}
\end{remark}

\section{Evaluation}
\label{sec: experiment}

We evaluate the proposed distributionally robust approach for safe and stable control synthesis in an adaptive cruise control problem introduced in \cite{ames2014cdc}.

\subsection{Cruise Control Model}\label{sec: evaluate_model}
% The robot motion\NA{What robot? This sentence can be more direct: ``Consider a simplified adaptive cruise control model...''} is
Consider a simplified adaptive cruise control model that consists of two vehicles, one leading vehicle traveling at a constant speed and one following vehicle using our control synthesis methodology. The objective is have the following vehicle achieve a desired speed while keeping a safe distance from the leading vehicle. The system model is:
\begin{equation}\label{ACC: dynamic}
\begin{aligned}
    \underbrace{\begin{bmatrix}
    \dot{p} \\
    \dot{v} \\
    \dot{z}
    \end{bmatrix}}_{\dot{\bfx}}
	=	
	\underbrace{\left[ \begin{matrix}
			v \\
			- \frac{1}{m}F_r(v) \\
			v_0-v
			\end{matrix} \right]}_{f(\bfx)}
	+
	\underbrace{\left[ \begin{matrix}
		0 \\
		\frac{1}{m}\\
		0
		\end{matrix} \right]}_{g(\bfx)}u,
\end{aligned}
\end{equation}
where $v$ and $v_0$ are the velocities of the following and leading vehicles, respectively, $F_r(v) = f_0+f_1v+f_2v^2$ is the air drag, $p$ is the following vehicle position, and $z$ is the distance to the leading vehicle. The input is constrained by 
%
% \begin{equation}\label{ACC: input constraint}
$-c_dg \leq \frac{u}{m} \leq c_ag$, 
% \end{equation}
%
where $c_d$ and $c_a$ denote the factor of $g$ for deceleration and acceleration, respectively. We define a CLF,
%
%\begin{equation}\label{ACC: CLF}
$V(\bfx) = (v-v_d)^2$, 
%\end{equation}
%
where $v_d$ is the desired speed of the following vehicle. The safety requirements is specified by the CBF
%
%\begin{equation}\label{ACC: CBF}
$h(\bfx) = z - \frac{1}{2} \frac{(v_0 - v)^2}{c_d g} - 1.8v$.
%\end{equation}
%
%
% \marginJC{What is $v_d$? This section needs to be writing assuming nothing from the reader: instead, you assume he/she has just read \cite{ames2014cdc}}
% To evaluate our DRCCP formulations, w
%
We assume that the system \eqref{ACC: dynamic} is uncertain with the following parametric uncertainty, 
\begin{equation}\label{ACC: dynamics with uncertainty}
	\dot{\bfx} = (\tilde{F}(\bfx) + \sum_{i=1}^3 W_i(\bfx) \xi_i)\ubfu
\end{equation}
where $\tilde{F}(\bfx) = [f(\bfx) \ g(\bfx)], \ \ubfu = [1 \ u]^\top$, and:
%the parametric matrices $W_i$ are given by 
\begin{equation} \notag
\begin{aligned}
	W_1(\bfx)
	\!=\!	
	\begin{bmatrix}
		0 & 0\\
		\frac{v}{20} & 0 \\
		0 & 0
		\end{bmatrix}\!\!,\, W_2(\bfx)
		\!=\!	
		\begin{bmatrix}
		0 & 0\\
		0 & \frac{0.05}{m} \\
		0 & 0
		\end{bmatrix}\!\!,\,  W_3(\bfx)
		\!=\!	
		\begin{bmatrix}
		0 & 0\\
		0 & 0 \\
		\frac{2z}{25} & 0
		\end{bmatrix}\!\!,
\end{aligned}
\end{equation}
where $W_1$, $W_2$, and $W_3$ represent the model perturbations in the drag, input force, and leading vehicle distance, respectively. Table~\ref{table: parameters} reports the parameter values used in the simulation. 

\begin{table} 
\centering
\caption{Parameters used in the simulation results}
\begin{tabular}{ccc}
	\toprule 
	Variable & Description & Value\\
	\midrule  
	$g$& Gravitational acceleration & 9.81\\
	$m$& Mass of vehicle & 1650\\
	$f_0$& Coefficient in $F_r(v)$ & 0.1\\
	$f_1$& Coefficient in $F_r(v)$ & 5\\
	$f_2$& Coefficient in $F_r(v)$ & 0.25\\
	$v_d$& Desired speed & 35\\
	$v_0$& Speed of leading vehicle & 20\\
	$c_a$& Max accelerate constant & 0.3\\
	$c_d$& Max decelerate constant & -0.3\\
	\bottomrule
\end{tabular}
\label{table: parameters}
\end{table}

\subsection{Results}\label{sec: experiment_results}

We evaluate our distributionally robust control synthesis approach and illustrate its versatility in handling model uncertainty. We report simulation results from the unbounded uncertainty formulation (Proposition~\ref{proposition: convex_final_qp}) and the bounded uncertainty formulation (Proposition~\ref{proposition: affine_final_socp}). For comparison, we include results from the CLF-CBF-QP (which takes no model uncertainty into account) formulation in \cite{ames2014cdc} with baseline controller $\underline{\bfk}(\bfx) = [1 \ F_r(v)]^\top$, the robust (which requires prior knowledge on the error bound) and the chance-constrained (which assumes the uncertainty distribution to be Gaussian) formulations in \cite{Long2022RAL}. 
In the simulation, the error bounds are provided by the support set information and the Gaussian parameters are estimated via offline uncertainty samples.
%In our simulations, we use the support information to provide the error bound for robust formulation and use the offline model uncertainty samples to estimate the mean and variance of the Gaussian distribution used in the chance-constrained formulation in \cite{Long2022RAL}.
%\NA{these require a little more explanation}
%
% \marginJC{We did not call it chance-constrained formulation in~\cite{Long2022RAL} (probabilistic instead). Maybe worth pointing this out to the reader to avoid confusion}
% \YY{Although probabilistic formulation is the original name in \cite{Long2022RAL}, but chance-constrained formulation fits better in our context. I added another line of explanation to pointed out that the formulation in \cite{Long2022RAL} requires Gaussian distribution. I think with the extra explanation, we can call it chance-constrained formulation.}
% 
In all cases, we use the value of the CBF as a measure of the safety ensured by the corresponding approach. We consider different choices of Wasserstein radius $r_1 = r_2 = r$, confidence level $\epsilon$, support set $\Xi$, offline uncertainty samples $\{\bfxi_i\}_{i=1}^N$, and online true uncertainty realization $\bfxi^{*}$. To demonstrate that our formulation ensures safety for out-of-sample uncertainty, we use different distributions for sampling offline observations $\bfxi_i$ and a true online uncertainty realization $\bfxi^*$. 

We consider $8$ cases with different parameter choices, where $\calN$ and $\calB$ denote normal and beta distributions, respectively. For each case, we conduct 50 simulations with the same $\{\bfxi_i\}_{i=1}^{N = 10}$ and different $\bfxi^{*}$.
%, sampling from a distribution. 
% To ease the computational cost as we discussed in Remark~\ref{remark: large_N}, 
\\
\textit{Case 1 (Gaussian Distribution):} $r = 0.3, \ \epsilon = 0.1, \ \Xi = [\mathbf{-2}_3,\mathbf{2}_3], \ \bfxi_i \sim \calN(-1,1/3), \ \bfxi^{*} \sim \calN(-1,1/3)$. \\ %, W_d = 0.0024$. \\
\textit{Case 2 (Confident in Sample):} $r = 0.001, \ \epsilon = 0.1, \ \Xi = [\mathbf{-2}_3,\mathbf{2}_3], \ \bfxi_i \sim \calN(-1,1/3), \ \bfxi^{*} \sim \calN(-1,1/3)$. \\ %, W_d = 0.0024$. \\
\textit{Case 3 (Out of Sample):} $r = 0.3, \ \epsilon = 0.1, \ \Xi = [\mathbf{-2}_3,\mathbf{2}_3], \ \bfxi_i \sim \calN(0,0.2), \ \bfxi^{*} \sim \calB(0.1,2) - 1$. \\ %, W_d = 0.95$. \\
\textit{Case 4 (Baseline Radius and Confidence):} $r = 0.3, \ \epsilon = 0.1, \ \Xi = [\mathbf{-2}_3,\mathbf{2}_3], \ \bfxi_i \sim 4\calB(3,0.1) - 2, \ \bfxi^{*} \sim \calN(-1,1/3)$. \\ %, W_d = 2.87$. \\
\textit{Case 5 (Larger Radius):} $r =  0.5, \ \epsilon = 0.1, \ \Xi = [\mathbf{-2}_3,\mathbf{2}_3], \ \bfxi_i \sim 4\calB(3,0.1) - 2, \ \bfxi^{*} \sim \calN(-1,1/3)$. \\ %, W_d = 2.87$. \\
\textit{Case 6 (Higher Confidence):} $r =0.3, \ \epsilon = 0.05, \ \Xi = [\mathbf{-2}_3,\mathbf{2}_3], \ \bfxi_i \sim 4\calB(3,0.1) - 2, \ \bfxi^{*} \sim \calN(-1,1/3)$. \\ %, W_d = 2.87$. \\
\textit{Case 7 (Larger Radius and Higher Confidence):} $r = 0.5, \ \epsilon= 0.05, \ \Xi = [\mathbf{-2}_3,\mathbf{2}_3], \ \bfxi_i \sim 4\calB(3,0.1) - 2, \ \bfxi^{*} \sim \calN(-1,1/3)$. \\ %, W_d = 2.87$. \\
\textit{Case 8 (Out of Support):} $r =0.3, \ \epsilon = 0.1, \ \Xi = [\mathbf{-0.5}_3,\mathbf{0.5}_3], \ \bfxi_i \sim \calB(2,0.1) - 0.5, \ \bfxi^{*} \sim \calN(-1,1/3)$. %\\ %, W_d = 0.55$. \\

In Table~\ref{table: results}, we report the failure rate and the average CBF values for the $8$ cases above. In \textit{Cases 1} and \textit{2}, under Gaussian uncertainty in the dynamics model, all formulations ensure safety except the CLF-CBF-QP. When we set the Wasserstein radius small ($r = 0.001$), meaning that we are confident in the offline uncertainty samples, the unbounded DRCCP and bounded DRCCP formulations have the same mean CBF values. 
%\TODO{Intuitively, this is because the bounds information $\bfC$ and $\bfd$ is not active when the radius is sufficiently small.}
% \YY{Delete for now until we find a better expression.}
%
%\marginJC{I know what you're trying to say, but the wording is ackward. What does it mean for "the bounds information $\bfC$ and $\bfd$ to not be active"? Are you talking about certain constraints in (26) being strictly satisfied?}
%
In \textit{Case 3}, we verify that if the uncertainty distribution shifts during the online phase (e.g., the online uncertainty no longer from a Gaussian distribution), then the Gaussian CLF-CBF-SOCP formulation fails, while the other three formulations ensure safety. \textit{Cases 4} to \textit{7} demonstrate the effects of the Wasserstein distance and confidence level in our bounded and unbounded DRCCP formulations. On the one hand, the unbounded DRCCP formulations tend to be more conservative if we increase the Wasserstein radius $r$ and/or the confidence level, as shown in Fig.~\ref{fig: out_of_sample}. On the other hand, only increasing the confidence level makes the bounded DRCCP controller more conservative, since support information provides a tighter bound than the Wasserstein radius. In \textit{Case 8}, we see that the unbounded DRCCP formulation works well even with out-of-support uncertainty, while the robust CLF-CBF-SOCP and bounded DRCCP both fail due to the provided incorrect support set information, as Fig.~\ref{fig: out_of_support} shows. 

Generally, the controller provided by the bounded DRCCP formulation has the best performance in ensuring safety while not being too conservative (smaller average CBF values). However, if one fails to provide reliable support set information, then the controller provided by the unbounded DRCCP formulation is the safe choice.

\begin{figure}[t]
    \centering
    \includegraphics[width=\linewidth]{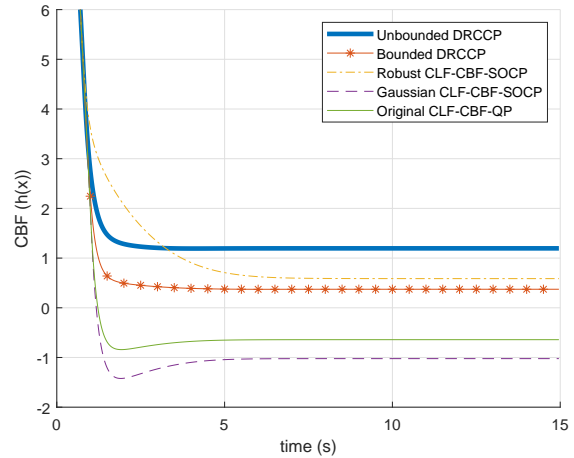}
    \caption{CBF value of one of the 50 simulations corresponding to \textit{Case 5}. Both the unbounded and bounded DRCCP formulations ensure safety while the CLF-CBF-QP and the Gaussian formulation fail. This demonstrates that either the bounded or unbounded DRCCP formulation ensures safety for out-of-sample uncertainty. The unbounded DRCCP  formulation is more conservative since it does not take the uncertainty support set information into account.}
    \label{fig: out_of_sample}
\end{figure}

%
%\marginJC{Some times, we write "Case 5" and others "case 5". Unify. I like more the former.}
%

\begin{figure}[t]
    \centering
    \includegraphics[width=\linewidth]{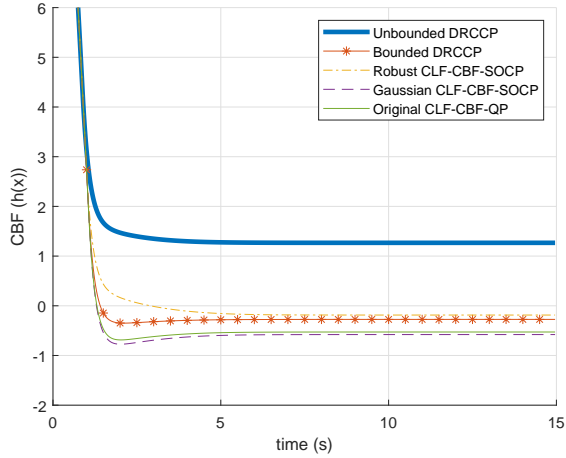}
    \caption{CBF value of one of the 50 simulations corresponding to \textit{Case 8}. The offline uncertainty distribution is set to be within the uncertainty support set $\Xi$, while the online uncertainty distribution is outside of $\Xi$. The controller obtained with the bounded DRCCP formulation~\eqref{eq: CBF_Affine_Reformulation_final} fails to guarantee safety because the assumptions in Proposition~\ref{proposition: affine_final_socp} are violated. However, the controller obtained with the unbounded DRCCP formulation~\eqref{eq: CBF_Convex_final} still guarantees safety.
    }
    \label{fig: out_of_support}
\end{figure}
	
%
%\marginJC{There is a writing rule that says something like: "if you don't cite a figure in the main body, then eliminate the figure". We don't -- we only refer to Fig 2 in the caption of Table II. We should refer to both of them in the main body!}
%

%In Fig.~\ref{fig: var_wasse_params}, we increase the Wasserstein radius $r$ and the confidence level $1-\epsilon$ while keeping the uncertainty parameters the same in Fig.~\ref{fig: case_3}. The barrier function value increases in both formulations, meaning that the feedback controller is more conservative. 

% In Fig.~\ref{fig: same_ambigu}, the offline uncertainty observations and online uncertainty are set to be in the same distribution. 

%
\begin{table}[htb]
	\centering
	\caption{Failure rate and average CBF values. The results are shown in the following format: $a\% \mid b$, where $a\%$ denotes the violation rate of each formulation: (simulations with unsafe state)/(total simulations), and $b$ denotes the average value of CBF over all simulations. The average CBF value is computed based on stabilized CBF values, e.g., for $5 \leq t \leq 15$ in Fig.~\ref{fig: out_of_support}.%\TODO{The numbers below are too small to read comfortably. Using the multirow command instead of table-inside-table might save space, see \url{https://tex.stackexchange.com/questions/73283/how-to-use-multirow}}
    }
	\scalebox{0.8}{
    \begin{tabular}{cccccc}
		\toprule 
		%Case & \begin{tabular}{c}Unbounded \\ DRCCP \end{tabular}& \begin{tabular}{c}Bounded \\ DRCCP \end{tabular} &  \begin{tabular}{c}Robust \\ CLF-CBF-SOCP \end{tabular} & \begin{tabular}{c}Gaussian \\ CLF-CBF-SOCP \end{tabular} & \begin{tabular}{c}Original \\ CLF-CBF-QP \end{tabular} \\
        \multirow{ 2}{*}{Case} & Unbounded & Bounded & Robust & Gaussian & Original \\
         & DRCCP & DRCCP & CLF-CBF-SOCP & CLF-CBF-SOCP & CLF-CBF-QP \\
		\midrule  
		1 & 0\% $\mid$ 2.11 & $\boldsymbol{0\%}$ $\mid$ $\bf0.56$ & 0\% $\mid$ 1.01 & 0\% $\mid$ 0.55 & 100\% $\mid$ -0.48 \\
        2 & $\boldsymbol{0\%}$ $\mid$ $\bf0.35$ & $\boldsymbol{0\%}$ $\mid$ $\bf0.35$ & 0\% $\mid$ 1.01 & 0\% $\mid$ 0.56 & 100\% $\mid$ -0.48 \\
		3 & 0\% $\mid$ 1.23 & $\boldsymbol{0\%}$ $\mid$ $\bf0.55$ & 0\% $\mid$ 0.96 & 98\% $\mid$ -0.17 & 100\% $\mid$ -0.50 \\
		4 & 0\% $\mid$ 0.56 & $\boldsymbol{0\%}$ $\mid$ $\bf0.51$ & 0\% $\mid$ 0.89 & 100\% $\mid$ -0.92 & 100\% $\mid$ -0.53 \\
		5 & 0\% $\mid$ 1.42 & $\boldsymbol{0\%}$ $\mid$ $\bf0.57$ & 0\% $\mid$ 1.01 & 100\% $\mid$ -0.88 & 100\% $\mid$ -0.48 \\
		6 & 0\% $\mid$ 1.94 & $\boldsymbol{0\%}$ $\mid$ $\bf0.57$ & 0\% $\mid$ 1.01 & 100\% $\mid$ -0.65 & 100\% $\mid$ -0.48 \\
		7 & 0\% $\mid$ 4.49 & $\boldsymbol{0\%}$ $\mid$ $\bf0.57$ & 0\% $\mid$ 1.01 & 100\% $\mid$ -0.64 & 100\% $\mid$ -0.49 \\
		8 & $\boldsymbol{0\%}$ $\mid$ $\bf1.29$ & 98\% $\mid$ -0.26 & 84\% $\mid$ -0.16 & 100\% $\mid$ -0.57 & 100\% $\mid$ -0.51 \\
		\bottomrule
	\end{tabular}}
	\label{table: results}
\end{table}
%
% \YY{\textit{Case 8} deleted.}

%\input{tex/Experiment_Results.tex}
\section{Conclusions}
% \& Future Work

We considered the problem of enforcing safety and stability of uncertain control-affine systems. Compared with previous approaches, we derive new distributionally robust chance constrained formulations of safe and stable control synthesis that do not require any prior knowledge of error bounds or uncertainty distributions.  Using only offline model uncertainty samples, we show that our formulations ensure safety and stability with out-of-sample errors during online execution. Future work will consider deploying the algorithms on real autonomous systems and learning the perturbation matrices and uncertainty samples from offline state-control sequences. 

%\marginJC{Bibliography would benefit from a pass to attempt more consistency: e.g., no need to write abbreviation for ACC or CDC, missing places where the confs took place, some journal names abbreviated and others not}

\bibliography{ref}
\bibliographystyle{ieeetr}
\appendices
\section{Different Choice of Metric $d$}\label{sec: appendix_l1_qp}
We show that when the metric $d$ of the uncertainty support set $\Xi$ is the $L_1$ norm (instead of the Euclidean norm as in Propositions~\ref{proposition: convex_final_qp} and~\ref{proposition: affine_final_socp}), then \eqref{eq: general_control_synthesis} becomes a QP for both the cases of unbounded and bounded uncertainty sets.

% Given the expression of $G_l$ in \eqref{eq: G_def} and suppose $\Xi = \bbR^{k}$, consider the $p$-Wasserstein distance with $p=1$, the following QP is equivalent to \eqref{eq: general_control_synthesis}:

\begin{proposition}[\textbf{DRCCP formulation with unbounded support set under $L_1$ norm}]
\label{proposition: convex_l1_qp}
Consider the optimization problem in \eqref{eq: general_control_synthesis} with $G_l$ in \eqref{eq: G_def}, $p$-Wasserstein distance with $p=1$, and $\Xi = \bbR^{k}$ with metric $d(\bfxi,\bfxi') = \|\bfxi-\bfxi'\|_1$. Then, the following QP is equivalent to \eqref{eq: general_control_synthesis}:
\begin{align}
\label{eq: CBF_Convex_l1}
& \min_{\ubfu \in \underline{\calU}, t_l \in \bbR, s_l(i) \in \bbR}\,\, \|(\ubfu-\underline{\bfk}(\bfx))\|^2  \\
\mathrm{s.t.} \, \,  
& r |\bfR_l^\top(\bfx) \ubfu| \leq (t_l \epsilon - \frac{1}{N} \sum_{i=1}^{N} s_l(i)) \mathbf{1}_{k}, \notag  \\
& s_l(i) \geq G_l(i)+t_l, \; \; s_l(i) \geq 0, \; \; \forall i \in [N], l \in [M].  \notag
\end{align}
\end{proposition}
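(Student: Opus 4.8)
The plan is to follow the argument of Proposition~\ref{proposition: convex_final_qp} almost verbatim, changing only the norm bookkeeping that enters through the choice of metric $d$. First I would invoke \cite[Lemma V.8]{Hota2019DataDrivenCC} with $\Xi = \bbR^{k}$ and $d(\bfxi,\bfxi') = \|\bfxi-\bfxi'\|_1$, which reduces the inner supremum over the Wasserstein ball in the constraint of \eqref{eq: general_control_synthesis} to the sample-average expression $\inf_{t_l \in \bbR}[\frac{1}{N}\sum_{i=1}^{N} (G_l(i)+t_l)_+ - t_l \epsilon]$ plus a regularization term $r\, L(\ubfu,\bfx)$, where now $L(\ubfu,\bfx)$ is the Lipschitz constant of $\bfxi \mapsto G_l(\bfx,\ubfu,\bfxi)$ \emph{with respect to the $L_1$ norm}.

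The only genuinely new computation is the value of this Lipschitz constant. Since $G_l$ in \eqref{eq: G_def} is affine in $\bfxi$ with gradient $\nabla_{\bfxi} G_l = \bfR_l^\top(\bfx)\ubfu \in \bbR^{k}$, and the Lipschitz constant of a differentiable affine function with respect to a norm equals the dual norm of its gradient \cite{shai_convex}, I would use that the dual of the $L_1$ norm is the $L_\infty$ norm to conclude $L(\ubfu,\bfx) = \|\bfR_l^\top(\bfx)\ubfu\|_\infty$, which is finite for every fixed $\bfx$. Substituting gives the single constraint $r\|\bfR_l^\top(\bfx)\ubfu\|_\infty + \inf_{t_l \in \bbR}[\frac{1}{N}\sum_{i=1}^{N} (G_l(i)+t_l)_+ - t_l \epsilon] \leq 0$ for each $l \in [M]$.

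Next I would linearize exactly as in Proposition~\ref{proposition: convex_final_qp}: introduce epigraph variables $s_l(i)$ and replace $(G_l(i)+t_l)_+$ by the pair $s_l(i) \geq G_l(i)+t_l$, $s_l(i) \geq 0$, checking both inclusions of optimal/feasible points (an optimal $(\ubfu, t_l^*, s_l(i)^*)$ remains feasible after shrinking $s_l(i)^*$ to $(G_l(i)+t_l^*)_+$, and conversely the choice $\hat{s}_l(i) = (G_l(i)+\hat{t}_l)_+$ recovers feasibility for \eqref{eq: CBF_Convex_l1} with equal cost), so the $\inf_{t_l}$ can be absorbed into the decision variables. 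Finally I would rewrite $\|\bfv\|_\infty \leq c$ as the componentwise inequality $|\bfv| \leq c\,\mathbf{1}_k$ to turn $r\|\bfR_l^\top(\bfx)\ubfu\|_\infty \leq t_l \epsilon - \frac{1}{N}\sum_{i=1}^{N} s_l(i)$ into $r|\bfR_l^\top(\bfx)\ubfu| \leq (t_l \epsilon - \frac{1}{N}\sum_{i=1}^{N} s_l(i))\mathbf{1}_k$, i.e. $2k$ affine constraints. Since $\bfR_l(\bfx)$, $\bfq_l(\bfx)$, and $\alpha_l(J_l(\bfx))$ are all fixed once $\bfx$ is fixed, every constraint is affine in $(\ubfu, t_l, s_l(i))$ while the objective $\|\ubfu-\underline{\bfk}(\bfx)\|^2$ is convex quadratic, so \eqref{eq: CBF_Convex_l1} is a QP equivalent to \eqref{eq: general_control_synthesis}; in contrast to the $L_2$ case no second-order cone is needed because the $\ell_\infty$ ball is polyhedral. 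The main obstacle is essentially nothing beyond care with the dual-norm identity and re-running the two-directional equivalence between the bilevel form and the lifted QP; everything else is routine.
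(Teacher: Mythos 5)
Your proposal is correct and follows essentially the same route as the paper, which only offers a proof sketch for this appendix result: invoke the same Wasserstein-ball reduction with the Lipschitz constant now given by the dual ($L_\infty$) norm of $\bfR_l^\top(\bfx)\ubfu$, reuse the epigraph linearization of Proposition~\ref{proposition: convex_final_qp}, and expand the $L_\infty$ bound into componentwise affine inequalities so the program is a QP. Your write-up is in fact more detailed than the paper's sketch, but introduces no new ideas beyond it.
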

%

% Given the expression of $G_l$ in \eqref{eq: G_def} and let $\Xi = \{\bfxi \in \mathbb{R}^k \;|\; \bfC \bfxi \leq \bfd\}$ be compact, where $\bfC \in \mathbb{R}^{q \times k}$ and $\bfd \in \mathbb{R}^q$ for some $q > 0$. Consider the $p$-Wasserstein distance with $p=1$, the following QP is equivalent to \eqref{eq: general_control_synthesis},

\begin{proposition}[\textbf{DRCCP formulation with bounded polyhedron set under $L_1$ norm}]
\label{proposition: affine_l1_qp}
Consider the optimization problem in \eqref{eq: general_control_synthesis} with $G_l$ in \eqref{eq: G_def}, $p$-Wasserstein distance with $p=1$, and compact $\Xi = \{\bfxi \in \mathbb{R}^k \;|\; \bfC \bfxi \leq \bfd\}$ with $\bfC \in \mathbb{R}^{q \times k}$ and $\bfd \in \mathbb{R}^q$ for some $q>0$ and metric $d(\bfxi,\bfxi') = \|\bfxi-\bfxi'\|_1$. Then, the following QP is equivalent to \eqref{eq: general_control_synthesis}:  
\begin{align}
\label{eq: CBF_Affine_Reformulation_l1}
& \min_{\ubfu \in \underline{\calU}, t_l \in \bbR, s_l(i) \in \bbR, \beta \in \bbR_{\geq 0},  \bfeta_{i} \in \bbR^q} \, \, \|(\ubfu-\underline{\bfk}(\bfx))\|^2 \qquad  \\
\mathrm{s.t.} \, \,  
& \beta r + \frac{1}{N}\sum_{i=1}^N s_l(i) - t_l\epsilon \leq 0, \notag\\
& s_l(i) \geq 0, \notag \\
& s_l(i) \geq \ubfu^\top \bfq_l(\bfx) + t_l + (\ubfu^\top \bfR_l(\bfx) - \bfeta_{i}^\top \bfC) \bfxi_i + \bfeta_{i}^\top \bfd + \notag \\
&\alpha_l(J_l(\bfx)), \notag \\
& |\bfR_l^\top(\bfx)\ubfu - \bfC^\top \bfeta_{i} | \leq \beta \mathbf{1}_{k}, \; \; \bfeta_{i} \geq \boldsymbol{0}_q, \; \; \forall i \in [N], l \in [M]. \notag
\end{align}
\end{proposition}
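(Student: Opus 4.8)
The plan is to mirror the two-step structure already used for Proposition~\ref{proposition: affine_final_socp}, but now with the $L_1$ metric on $\Xi$, and to keep track of how the change of metric propagates through the duality argument of \cite[Proposition V.1]{Hota2019DataDrivenCC}. First I would recall that the only place the choice of metric $d$ enters the reformulation of the worst-case CVaR constraint $\sup_{\mathbb{P}\in\calM_N^r}\textit{CVaR}_{1-\epsilon}^{\mathbb{P}}(G_l)\le 0$ over a polyhedral support $\Xi=\{\bfxi : \bfC\bfxi\le\bfd\}$ is through the norm bounding the ``transport cost'' dual variable: the constraint that previously read $\|\ubfu^\top\bfR_l(\bfx)-\bfeta_i^\top\bfC\|\le\beta$ (dual norm of the $L_2$ norm, which is the $L_2$ norm itself) now becomes a bound in the dual of the $L_1$ norm, i.e.\ the $L_\infty$ norm: $\|\ubfu^\top\bfR_l(\bfx)-\bfeta_i^\top\bfC\|_\infty\le\beta$. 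Writing the $L_\infty$ bound componentwise gives exactly the coordinatewise inequality $|\bfR_l^\top(\bfx)\ubfu-\bfC^\top\bfeta_i|\le\beta\mathbf{1}_k$ appearing in \eqref{eq: CBF_Affine_Reformulation_l1}. Everything else in the derivation of \eqref{eq: CBF_Affine_Reformulation} — the introduction of $\beta\in\bbR_{\ge0}$, the per-sample variables $s_l(i)$, $\bfeta_i\ge\mathbf 0_q$, and the aggregate constraint $\beta r + \frac1N\sum_i s_l(i) - t_l\epsilon\le 0$ — is unchanged, since those come from the CVaR epigraph and the polyhedral-support Lagrangian, not from the metric.

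Second I would carry out the same epigraph/ReLU splitting used at the end of the proof of Proposition~\ref{proposition: affine_final_socp}: the inequality $(w_i)_+\le s_l(i)$, with $w_i=\ubfu^\top\bfq_l(\bfx)+t_l+(\ubfu^\top\bfR_l(\bfx)-\bfeta_i^\top\bfC)\bfxi_i+\bfeta_i^\top\bfd+\alpha_l(J_l(\bfx))$, is equivalent to the pair $s_l(i)\ge w_i$ and $s_l(i)\ge 0$, which are the two $s_l(i)$-constraints in \eqref{eq: CBF_Affine_Reformulation_l1}. The equivalence-of-optima argument is identical to the one in Proposition~\ref{proposition: convex_final_qp}: any feasible point of the split program yields a feasible point of the reformulation with $s_l(i)$ replaced by $(w_i)_+$ and the same $\ubfu$, hence no worse cost; conversely, given an optimal $\ubfu$ and $\hat t_l$ for the reformulation one chooses $\hat s_l(i)=(w_i)_+$ to get a feasible point of the split program with the same cost. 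Since the objective $\|\ubfu-\underline{\bfk}(\bfx)\|^2$ and all other constraints are untouched, this shows \eqref{eq: CBF_Affine_Reformulation_l1} is equivalent to \eqref{eq: general_control_synthesis}. Note that, unlike Propositions~\ref{proposition: convex_final_qp} and~\ref{proposition: affine_final_socp}, no second-order cone constraint is needed to linearize the objective: $\|\ubfu-\underline{\bfk}(\bfx)\|^2$ is already a convex quadratic, and the $L_\infty$/componentwise constraints are linear, so the whole program is a genuine QP — which is the point of the remark that motivated this appendix. Proposition~\ref{proposition: convex_l1_qp} is then the special case $\bfC=\mathbf 0_{q\times k}$, $\bfd=\mathbf 0_q$, where $\bfeta_i$ drops out, $\beta$ becomes $\sup$ over the $L_\infty$ norm of $\ubfu^\top\bfR_l(\bfx)$, and the aggregate constraint collapses to the single displayed coordinatewise inequality in \eqref{eq: CBF_Convex_l1}.

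The main obstacle I expect is not conceptual but bookkeeping: making sure the $L_1$-to-$L_\infty$ dualization is invoked correctly, i.e.\ confirming that \cite[Proposition V.1]{Hota2019DataDrivenCC} (and \cite[Corollary 5.1]{Esfahani2018DatadrivenDR}) is stated for a general norm on $\Xi$ and that the relevant dual object is the support function of the unit ball of $d$, whose value on the linear functional $\bfxi\mapsto(\ubfu^\top\bfR_l(\bfx)-\bfeta_i^\top\bfC)\bfxi$ is precisely the dual norm of its coefficient vector. Once that is pinned down, the $L_\infty$ bound — and its equivalent componentwise form $|\cdot|\le\beta\mathbf 1_k$ — follows immediately, and the remainder is a verbatim repeat of the arguments in Propositions~\ref{proposition: convex_final_qp} and~\ref{proposition: affine_final_socp}. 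A secondary point worth a sentence is well-definedness of the Lipschitz/regularization term: for $\Xi=\bbR^k$ one must again assume the relevant coefficient (here the $L_\infty$ norm of $\ubfu^\top\bfR_l(\bfx)$) is finite, exactly as in the $L_2$ case, so that the sample-average-plus-regularization reformulation of the worst-case CVaR is valid.
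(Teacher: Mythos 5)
Your proposal is correct and follows essentially the same route as the paper's (much terser) proof sketch: replace the $L_2$ bound on the coefficient of $\bfxi$ by its $L_1$-dual, i.e.\ the $L_\infty$ norm written componentwise, reuse the epigraph/ReLU splitting and equivalence-of-optima argument from Propositions~\ref{proposition: convex_final_qp} and~\ref{proposition: affine_final_socp} verbatim, and note that the quadratic objective no longer needs an SOC reformulation since all constraints are now linear. Your added care about verifying that the cited duality results hold for a general norm (with the dual norm appearing as the support function of the unit ball) is a reasonable refinement of what the paper leaves implicit.
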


% We describe 
% First, according to \cite{alizadeh2003second}, the second-order cone constraint $y+1 \geq \sqrt{ \|2(\ubfu-\underline{\bfk}(\bfx))\|^2  + (y-1)^2}$ is equivalent to $y \geq \|(\ubfu-\underline{\bfk}(\bfx))\|^2$.

We provide a proof sketch for these results. When $d$ is the $L_1$ norm, the Lipschitz constant in \eqref{eq: L_h_define} is defined by the $L_{\infty}$ norm, since the dual norm of $L_1$ is $L_{\infty}$. Similarly, the $L_2$ norm in the fourth constraint in Proposition~\ref{proposition: affine_final_socp} is replaced by the $L_{\infty}$ norm. This means that we no longer need to reformulate the objective function, since all constraints are linear in the decision variables and both optimization problems are QPs.

\end{document}